\documentclass[12pt]{article}
\usepackage[utf8]{inputenc}
\usepackage{amsmath}
\usepackage{graphicx}
\usepackage{amssymb}
\numberwithin{equation}{section}
\usepackage{mathtools}
\usepackage{physics}
\usepackage{enumerate}
\usepackage{dashrule}

\usepackage[page, toc]{appendix}

\usepackage				[usenames,dvipsnames]	{color} 
\usepackage										{textcomp}
\usepackage										{leftidx}
\usepackage										{ulem}
\usepackage										{bbding}
\usepackage										{diagbox}
\usepackage										{multirow}
\usepackage										{longtable}
\usepackage										{listings}
\usepackage										{xparse}
\usepackage										{ifmtarg}
\usepackage										{mleftright}
\usepackage										{xcolor}
\lstset{
	backgroundcolor=\color{black!5}, 
	frame=tb, 
	tabsize=4, 
	showstringspaces=false, 
	numbers=none, 
	numbersep=5pt,
	basicstyle=\footnotesize,
	breaklines=true
}
\usepackage										{pdflscape}
\usepackage										{bbm}
\usepackage										{amsbsy}
\usepackage										{pdfpages}
\usepackage										{subcaption}
	\captionsetup{labelfont=bf, font=small}
	\captionsetup{format=plain}
\usepackage				[page, toc]				{appendix}
\usepackage										{siunitx}
\sisetup{separate-uncertainty}
\sisetup{locale=US}
\usepackage										{float}
\usepackage										{array}
\usepackage	[notextcomp, notext, nomath, not1]	{stix}
\usepackage										{dsfont}
\usepackage										{bigints}
\usepackage										{amsthm}
		\newtheorem{theorem}		{Theorem}		[section]
		\newtheorem*{theorem*}		{Theorem}
		\newtheorem{maintheorem}	{Theorem}	[]
		\newtheorem*{maintheorem*}	{Theorem}
		
		\newtheorem{mainconjecture}	{Conjecture}	[]
		\newtheorem*{conjecture*}	{Conjecture}
		
		\newtheorem{proposition}	{Proposition} 	[section]
		\newtheorem{lemma}			{Lemma} 		[proposition]
	\theoremstyle{definition}
		\newtheorem{definition}		{Definition}	[section]
	\theoremstyle{remark}
		\newtheorem{remark}			{Remark}		[theorem]
		
		\newtheorem*{eg*}			{Example}
	\theoremstyle{plain}
		
		

\numberwithin{table}{section}

\usepackage										{jheppub}
	\definecolor{darkblue}{rgb}{0.0,0.0,0.4}
	\definecolor{darkgreen}{rgb}{0.0,0.4,0.0}
	\hypersetup{
		colorlinks,
		linkcolor=black,
		citecolor=darkgreen,
		urlcolor=darkblue
	}


\setlength{\topmargin}{-.5in}
\hoffset-1in
\textwidth 16cm
\setlength{\oddsidemargin}{.5\paperwidth}
\addtolength{\oddsidemargin}{-0.5\textwidth}
\setlength{\evensidemargin}{.5\paperwidth}
\addtolength{\evensidemargin}{-0.5\textwidth}
\voffset0in
\textheight 38\baselineskip
\linespread{1.05}

\usepackage{tikz-cd}
\newsavebox{\verticalseparator}
\sbox{\verticalseparator}{\tikz {\path [fill, draw] (0,0) [out=0, in=180] to +(.25\textwidth,1pt) [out=0, in=180] to +(.5\textwidth,0pt) [out=180, in=0] to +(.25\textwidth,-1pt) [out=180, in=0] to cycle;}}


\def\red{\color{red}}
\def\black{\color{black}}

\def\gray{\color{gray}}

\def\white{\color{white}}



\def\CC{{\mathbb C}}

\def\LL{{\mathbb L}}

\def\NN{{\mathbb N}}

\def\QQ{{\mathbb Q}}
\def\RR{{\mathbb R}}

\def\ZZ{{\mathbb Z}}


\def\Hh{{\mathcal H}}

\def\Ll{{\mathcal L}}
\def\Mm{{\mathcal M}}

\def\Oo{{\mathcal O}}
\def\Pp{{\mathcal P}}

\def\Yy{{\mathcal Y}}





\makeatletter




\NewDocumentCommand{\Mgn}{oo}{%
	\IfValueTF{#1}%
		{\IfValueTF{#2}%
			{\@ifmtarg{#1}{\Mm^{#2}}{\Mm^{#2}_{#1}}}%
			{\@ifmtarg{#1}{}{\Mm_{#1}}}%
		}%
		{\IfValueTF{#2}%
			{\@iftarg{#2}{}{\Mm^{#2}}}%
			{\Mm_{g, n}}
		}
}
\NewDocumentCommand{\Mgnbar}{oo}{%
	\IfValueTF{#1}%
	{\IfValueTF{#2}%
		{\@ifmtarg{#1}{\overline{\Mm}^{#2}}{\overline{\Mm}^{#2}_{#1}}}%
		{\@ifmtarg{#1}{}{\overline{\Mm}_{#1}}}%
	}%
	{\IfValueTF{#2}%
		{\@ifmtarg{#2}{}{\overline{\Mm}^{#2}}}%
		{\overline{\Mm}_{g, n}}
	}
}

\DeclareMathOperator*{\Residuum}{Res}
\def\Res[#1]{{\underset{#1}{\Residuum}}}
\NewDocumentCommand{\Wgn}{o o}%
	{\IfValueTF{#1}%
		{\IfValueTF{#2}%
			{W_{#2}^{(#1)}}%
			{W^{(#1)}}%
		}%
	{W_n^{(g)}}%
	}

\NewDocumentCommand{\bbrack}{o}{\IfValueTF{#1}{[\![#1]\!]}{[\![]\!]}}
\NewDocumentCommand{\angles}{o o}%
	{\IfValueTF{#1}%
		{\IfValueTF{#2}%
			{\@ifmtarg{#1}{\langle \cdot \rangle_{#2}}{\langle #1 \rangle_{#2}}}%
			{\@ifmtarg{#1}{\langle \cdot \rangle}{\langle #1 \rangle}}%
		}%
		{\IfValueTF{#2}%
			{\@ifmtarg{#1}{}{\langle \cdot \rangle_{#2}}}%
			{\langle \cdot \rangle}%
		}%
	}
\def\angles[#1, #2]{{\langle#1\rangle_{{#2}}}}
\NewDocumentCommand{\sumkappa}{o}%
	{\IfValueTF{#1}%
		{e^{\sum_k \tilde{t}_{a_{#1}, k} \kappa_k}}%
		{e^{\sum_k \tilde{t}_{a, k} \kappa_k}}
	}





\NewDocumentCommand{\ceil}{m}{\left\lceil#1\right\rceil}

\def\mathi{{\mathrm{i}}}

\def\conv[#1]{{\textnormal{Conv}\mleft(#1\mright)}}

\makeatother

\usepackage{tikz-cd}
\usepackage[runin]{abstract}
\abslabeldelim{.}

\interfootnotelinepenalty=10000

\date{date}

\begin{document}

\begin{titlepage} 
	\begin{center}
	\vspace{1cm}   
	\vspace*{ 2.0cm}
	{\Large {\bf Tautological Intersection Numbers and Order-Consecutive Partition Sequences}}\\[12pt]
	\vspace{-0.1cm}
	\bigskip
	\bigskip 
	{ {{Finn Bjarne Jost}$^{\,\text{a, b}}$}
	\bigskip }\\[3pt]
	\vspace{0.cm}
	{
	  ${}^{\text{a}}$ 
	  {\it
		  Institute for Theoretical Physics,~Westfälische Wilhelms-Universität Münster, Wilhelm-Klemm-Straße 9, 48149 Münster, Germany\\
	  }
	  ${}^{\text{b}}$ 
	  {\it
		  Mathematical Institute,~Westfälische Wilhelms-Universität Münster,\\ Einsteinstraße 62, 48149 Münster, Germany\\
	  }
	}
	\vspace{2cm}
	\end{center}

	\begin{abstract}
		By recent work of Afandi, it is known that tautological intersection numbers on the moduli space of stable $n$-pointed genus $g$ curves can be arranged into families of Ehrhart polynomials, $\{L_{\vec{d}}\}$, for partial polytopal complexes. In particular, the $f^*$-vector of $L_{\vec{d}}$ is known to be integral and non-negative. In this paper, we show that both the $f^*$-vector and $h^*$-vector have an enumerative interpretation in the special case that $\vec{d} = (1, 1, \dots, 1)$. The $f^*$-vector counts order-consecutive partition sequences of $[n+1]$ and the $h^*$-vector is a binomial coefficient. Furthermore, we conjecture that, for all $\vec{d}$, the $f^*$-vector of $L_{\vec{d}}$ always forms a log-concave sequence, and we verify this conjecture in the case that~$\vec{d} = (1, 1, \dots, 1)$.
	\end{abstract}
	
\end{titlepage}

\section{Introduction}
The moduli space of complex curves has been a fruitful subject of fascinating studies since its construction and compactification by Deligne and Mumford in \cite{deligne1969irreducibility,Mumford1983}. \\
One particular interesting perspective was uncovered recently by Afandi in \cite{afandi2022ehrhart}. Afandi showed that intersection numbers in the moduli space of complex curves are determined by evaluations of Ehrhart polynomials of partial polytopal complexes. By this, it was possible to relate the algebro-geometric investigation of the top degree of the tautological ring of the moduli space of complex curves to combinatorial Ehrhart theory. \\
In this work we investigate the Ehrhart polynomials $\{\Ll_n\}_{n\in\NN}$ corresponding to a restricted class of $\psi$-class intersection numbers
\begin{align*}
	\Ll_n(g) \coloneqq \frac{g!\, 24^g}{n!} \int_{\Mgnbar[g, n+1]} \frac{\psi_1 \cdots \psi_n}{1-\psi_{n+1}}
\end{align*}
We show that two meaningful expansions of $\Ll_n$ are computed by counting order-consecutive partition sequences (see Definitions~\ref{def:consecutive-sequence} and~\ref{def:order-consecutive} in Section~\ref{sec:combinatorial-definitions}) and binomial coefficients, respectively. 
\begin{maintheorem}[Theorem \ref{thm:f*-OCPS} in Section~\ref{sec:fstar}]\label{mainthm:f*-OCPS}
	For $n\in \NN$, the $f^*$-vector $(f^*_i)_{i=0, \dots, n}$ of the normalized Ehrhart polynomial $\Ll_n(g)$ associated to $\psi$-class intersection numbers of powers $\vec{d}=(1, \dots, 1)\in\NN^n$ computes the number of order consecutive partition sequences of $[n+1]$ into $(i+1)$ parts, that is 
	\begin{align*}
		\Ll_n(g) = \sum_{i=0}^{n} f^*_i\; \binom{g-1}{i} \, , && \textnormal{with} && f^*_i = \textnormal{OCPS}^{(n+1)}(i+1) = \sum_{k=0}^{i} (-1)^{i+k} \binom{2k+n}{n} \binom{i}{k}\, .
	\end{align*}
\end{maintheorem}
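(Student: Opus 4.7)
The plan is to prove the theorem in three stages: first, evaluate $\Ll_n(g)$ in closed form using tautological relations on $\Mgnbar[g, n+1]$; second, invert a binomial change of basis to identify $f^*_i$; third, match the resulting arithmetic expression with the enumeration of order-consecutive partition sequences.

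For the first step, since $\dim \Mgnbar[g, n+1] = 3g - 2 + n$ and $\psi_1 \cdots \psi_n$ contributes $n$ to the degree, only the single term $\psi_{n+1}^{3g-2}$ in the geometric expansion of $1/(1 - \psi_{n+1})$ survives. Each of $\psi_1, \dots, \psi_n$ then appears to the first power, placing us squarely in the regime of the dilaton equation; I would peel them off by $n$ successive dilaton applications, each producing a factor of the form $(2g - 2 + k)$. Together with Witten's evaluation $\int_{\Mgnbar[g, 1]} \psi_1^{3g-2} = 1/(24^g g!)$, the normalization $g!\,24^g/n!$ telescopes cleanly and yields the compact closed form
$$
	\Ll_n(g) \;=\; \frac{1}{n!} \prod_{k=1}^{n} (2g - 2 + k) \;=\; \binom{2g + n - 2}{n}.
$$

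The second step is formal. Writing $h = g - 1$, the expansion $\binom{2h + n}{n} = \sum_{i=0}^{n} f^*_i \binom{h}{i}$ is uniquely determined by evaluating at nonnegative integers $h$ and applying the standard binomial Möbius inversion, which immediately delivers the alternating sum $f^*_i = \sum_{k=0}^{i} (-1)^{i+k} \binom{2k + n}{n} \binom{i}{k}$ asserted by the theorem.

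The third and most substantive step is identifying this sum with $\textnormal{OCPS}^{(n+1)}(i+1)$. Rather than manipulating the inclusion-exclusion expression directly, I would establish the equivalent generating identity
$$
	\binom{2h + n}{n} \;=\; \sum_{j=0}^{n} \textnormal{OCPS}^{(n+1)}(j+1) \binom{h}{j} \qquad \text{for every } h \in \NN,
$$
from which binomial inversion recovers the claimed formula for $f^*_i$. The left-hand side counts weak $n$-multisets on a $(2h+1)$-element alphabet, and the strategy is to construct a bijection partitioning such multisets by their pattern of coincidences: each multiset produces, on the one hand, an order-consecutive partition sequence of $[n+1]$ into $j+1$ parts, determined by its coincidence pattern, and on the other hand, a $j$-subset of an $h$-element set recording the auxiliary positional data. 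The main obstacle is designing this encoding so that the combinatorial output is genuinely \emph{order-consecutive} rather than an arbitrary ordered set partition; this is where the precise combinatorial definition of OCPS enters and careful bookkeeping is required.
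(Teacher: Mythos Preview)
Your first two steps are correct and coincide with the paper's argument: the paper likewise reduces $\Ll_n(g)$ to $\binom{2(g-1)+n}{n}$ via iterated dilaton and the one-point evaluation, and then applies Gregory--Newton interpolation (your ``binomial M\"obius inversion'') to the shifted variable $h=g-1$ to obtain
\[
f^*_i \;=\; \sum_{k=0}^{i} (-1)^{i+k}\binom{2k+n}{n}\binom{i}{k}.
\]
At that point the paper is finished: the identification of this alternating sum with $\textnormal{OCPS}^{(n+1)}(i+1)$ is not proved but \emph{cited} as the Hwang--Mallows closed formula (stated in the paper as a separate theorem from the literature). So if you are willing to quote Hwang--Mallows, your step~3 is superfluous and your proof is already complete after step~2.

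Your step~3 is therefore a genuinely different route: you propose to bypass the Hwang--Mallows formula and instead prove the generating identity $\binom{2h+n}{n}=\sum_j \textnormal{OCPS}^{(n+1)}(j+1)\binom{h}{j}$ bijectively, which would make the argument self-contained and would in fact re-derive Hwang--Mallows as a corollary via inversion. That is a nice idea, but as written it is only a plan, not a proof: you correctly flag the ``main obstacle''---arranging the encoding so that the resulting ordered partition is order-consecutive rather than arbitrary---and leave it unresolved. Until that encoding is specified and verified, step~3 has a real gap. Either supply the bijection explicitly, or simply cite Hwang--Mallows as the paper does and drop step~3.
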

\begin{maintheorem}[Theorem \ref{thm:h*} in Section~\ref{sec:hstar}]
	In the setting of Theorem~\ref{mainthm:f*-OCPS}, let $n\geq2$. Then, the normalized Ehrhart polynomial $\Ll_n(g)$ enjoys the $h^*$-expansion
	\begin{align*}
		\Ll_n(g)= \sum_{i=0}^n h^*_i \; \binom{g+n-i}{n}\, && \text{with} && h^*_i = \binom{n+1}{2(i-1)}\, .
	\end{align*}
\end{maintheorem}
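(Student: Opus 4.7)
My plan is to obtain the $h^*$-expansion by applying the standard linear change of basis to the $f^*$-expansion already proved in Theorem~\ref{mainthm:f*-OCPS}. Comparing the two ways of evaluating the generating series $\sum_{g\geq 0} \Ll_n(g)\,t^g$ --- using $\sum_{g\geq 0} \binom{g+n-j}{n} t^g = t^j/(1-t)^{n+1}$ for $0\leq j\leq n$ on the $h^*$-side, and, with $\binom{g-1}{i}$ interpreted polynomially in $g$, $\sum_{g\geq 0} \binom{g-1}{i}t^g = (-1)^i + t^{i+1}/(1-t)^{i+1}$ on the $f^*$-side --- and then extracting the coefficient of $t^j$ from $(1-t)^{n+1}\sum_g \Ll_n(g)t^g$, I arrive at the conversion formula
\begin{align*}
    h^*_j = (-1)^j\binom{n+1}{j}\Ll_n(0) + \sum_{i=0}^{j-1}(-1)^{j-1-i}\binom{n-i}{j-1-i}\, f^*_i.
\end{align*}

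The first term is killed precisely by the hypothesis $n\geq 2$: on $\Mgnbar[0, n+1]$, which has dimension $n-2$, every monomial appearing in the geometric expansion of $\psi_1\cdots\psi_n/(1-\psi_{n+1})$ has $\psi$-degree at least $n > n-2$, so each integral contributing to $\Ll_n(0)$ vanishes by degree mismatch. (The polynomial extrapolation of $\Ll_1$ gives $\Ll_1(0) = -1\neq 0$, which is exactly what obstructs the claim in the $n=1$ case.) Once this is established, I substitute the explicit expression $f^*_i = \sum_{k=0}^i (-1)^{i+k}\binom{2k+n}{n}\binom{i}{k}$ and interchange the order of summation.

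The resulting inner sum $\sum_i \binom{i}{k}\binom{n-i}{j-1-i}$ I plan to evaluate to $\binom{n+1}{j-1-k}$ by writing both factors as coefficient extractions, summing the geometric series $\sum_i (y(1+x)/(1+y))^i$, and identifying the result with $[x^k][y^{j-1}](1+y)^{n+1}/(1-xy)$. After the reindexing $\ell := j-1-k$, the proof concludes via the collapse
\begin{align*}
    h^*_j &= \sum_{\ell=0}^{j-1} (-1)^\ell \binom{n+1}{\ell}\binom{n+2(j-1-\ell)}{n} \\
    &= [x^{2(j-1)}]\,\frac{(1-x^2)^{n+1}}{(1-x)^{n+1}} = [x^{2(j-1)}](1+x)^{n+1} = \binom{n+1}{2(j-1)},
\end{align*}
using $\binom{n+2m-2\ell}{n} = [x^{2m-2\ell}](1-x)^{-n-1}$ and the elementary factorisation $1-x^2 = (1-x)(1+x)$. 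The principal obstacle is the second step: verifying that the $f^*$-to-$h^*$ conversion incurs no extra $(-1)^j\binom{n+1}{j}$ boundary term, which is exactly the content of $\Ll_n(0)=0$ and is what forces the hypothesis $n\geq 2$. Once that is cleared, the two generating-function identities that follow collapse the double sum into the advertised binomial with essentially no further work.
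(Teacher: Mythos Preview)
Your argument is correct and follows essentially the same route as the paper's own proof: both use the $f^*$-to-$h^*$ conversion of Section~\ref{sec:f*-h*-relation}, kill the boundary term via $\Ll_n(0)=0$ for $n\geq 2$, substitute the explicit $f^*_i$, collapse the inner sum by a Vandermonde-type identity to $\binom{n+1}{j-1-k}$, and then evaluate the remaining single sum. The only notable difference is the last step---the paper passes through $\Gamma$-functions and the reflection identity, whereas your generating-function factorisation $(1-x^2)^{n+1}/(1-x)^{n+1}=(1+x)^{n+1}$ is a cleaner way to reach the same conclusion.
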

Furthermore, we show that $(f^*_i)_{i=0, \dots, n}$ and $(h^*_i)_{i=0, \dots, n}$ are logarithmically concave sequences for all $n\in\NN$. 
Logarithmic concavity is shared by many important sequences in various fields in mathematics. While these connections to a wide range of disciplines have shed some light on logarithmic concavity and have given rise to methods of proving it, it is oftentimes a hard task to establish that a sequence exhibits this property. In particular, this is a new combinatorial result for order-consecutive partition sequences.
\begin{maintheorem}[Theorem \ref{thm:logconcavity-OCPS} in Section~\ref{sec:log-concavity-OCPS}]
	For every positive integer $n$ the sequence $(\textnormal{OCPS}^{(n)}(p))_{p=1, \dots, n}$ is logarithmically concave.
\end{maintheorem}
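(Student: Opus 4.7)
The plan is to prove the stronger statement that the generating polynomial
\[
F_n(y)\;:=\;\sum_{p=1}^{n}\textnormal{OCPS}^{(n)}(p)\,y^{p-1}
\]
has only real, non-positive zeros, and then to conclude log-concavity instantly from Newton's inequality. By Theorem~\ref{mainthm:f*-OCPS}, $F_n$ is precisely the $f^*$-generating polynomial of the Ehrhart polynomial $\Ll_{n-1}$.

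The first step is to obtain a closed form for $F_n$. Either by switching the order of summation in the alternating-sum formula of Theorem~\ref{mainthm:f*-OCPS}, or from the $h^*$-expansion of Theorem~\ref{thm:h*} via the standard linear change of basis between $f^*$- and $h^*$-polynomials, one arrives at the compact expression
\[
F_n(y)\;=\;\sum_{m\ge 0}\binom{n}{2m}(1+y)^{\,n-1-m}\,y^{\,m}.
\]
Setting $A=\sqrt{1+y}$ and $B=\sqrt{y}$, the right-hand side equals $\tfrac{1}{2}(1+y)^{(n-2)/2}\bigl[(A+B)^n+(A-B)^n\bigr]$. Since $u=A+B$ and $u^{-1}=A-B$ satisfy $u\cdot u^{-1}=1$ and $(u+u^{-1})/2=A$, the bracketed expression equals $2\,T_n(A)$, where $T_n$ denotes the Chebyshev polynomial of the first kind. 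This yields the identity
\[
F_n(y)\;=\;(1+y)^{(n-2)/2}\,T_n\!\bigl(\sqrt{1+y}\bigr),
\]
valid as a polynomial in $y$ once the half-integer powers are reconciled with the parity of $T_n$.

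Next, the zeros of $T_n(x)$ are the Chebyshev nodes $x_k=\cos\!\bigl((2k+1)\pi/(2n)\bigr)$, $k=0,\dots,n-1$, so the corresponding $y$-values $y_k=x_k^2-1=-\sin^2\!\bigl((2k+1)\pi/(2n)\bigr)$ all lie in $[-1,0]$; together with further copies of the root $y=-1$ coming from $(1+y)^{(n-2)/2}$, these account, with multiplicity, for all $n-1$ roots of $F_n$. Because $F_n$ has non-negative integer coefficients (they count OCPS), Newton's inequality for real-rooted polynomials with non-negative coefficients then yields that $(\textnormal{OCPS}^{(n)}(p))_{p=1,\dots,n}$ is in fact ultra-log-concave, and in particular log-concave.

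The main obstacle is the careful parity bookkeeping in the Chebyshev factorisation: when $n$ is odd, the exponent $(n-2)/2$ is a half-integer and $T_n(\sqrt{1+y})$ contains a compensating factor of $\sqrt{1+y}$ (since $T_n$ is then an odd polynomial); one must verify that these combine to give a genuine polynomial in $y$ of degree $n-1$ whose root multiplicities exactly match those predicted above. Once this verification is settled, the real-rootedness of $T_n$ and Newton's inequality are classical.
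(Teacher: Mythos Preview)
Your proposal is correct and follows the same overall strategy as the paper: show that the generating polynomial of $\textnormal{OCPS}^{(n)}(p)$ is real-rooted via a Chebyshev factorisation, then deduce log-concavity. Your closed form $F_n(y)=(1+y)^{(n-2)/2}\,T_n\!\bigl(\sqrt{1+y}\bigr)$ is exactly the paper's $g_{n-1}(y)$ from Proposition~\ref{prop:generatingfunction-g}, and your zeros $y_k=-\sin^2\!\bigl((2k+1)\pi/(2n)\bigr)$ coincide (after reindexing) with the paper's set $\Yy_0$.

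Where you differ is in the derivation of the closed form. The paper first establishes a recurrence for $\textnormal{OCPS}$, passes to the bivariate generating function $G(x,y)$ (Proposition~\ref{prop:generatingfunction}), and then extracts $g_n$ via Chebyshev identities in Appendix~\ref{app:proof-gn}. You instead feed Theorem~\ref{thm:h*} through the $f^*$--$h^*$ change of basis to obtain $F_n(y)=\sum_{m\ge0}\binom{n}{2m}(1+y)^{n-1-m}y^m$ directly, and then recognise this as $A^{n-2}\cdot\tfrac12\bigl[(A+B)^n+(A-B)^n\bigr]$. Your path is shorter but leans on the $h^*$-computation (itself proved in the paper from Theorem~\ref{thm:f*-OCPS}, so there is no circularity), whereas the paper's route is self-contained at the level of the OCPS recurrence. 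Your appeal to the classical Chebyshev nodes also bypasses the explicit root computation of Appendix~\ref{app:zero-set}, and invoking Newton's inequality yields ultra-log-concavity, a mild strengthening the paper does not state. One small caveat: the derivation via Theorem~\ref{thm:h*} literally applies only for $n\ge3$ (that theorem assumes degree at least $2$); the cases $n=1,2$ require a separate one-line verification of the displayed formula for $F_n$.
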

In the proof of this theorem, which constitutes the main technical part of this paper, we also provide proofs for closed expressions for generating functions of the number of order-consecutive partition sequences.\\

The article can be divided into two parts. In the first part, in sections~\ref{sec:EhrhartThy} and~\ref{sec:ModuliSpace}, aspects of classical Ehrhart theory and of the moduli space of complex curves are recalled. In the second part, in Section~\ref{sec:Results}, following the results of \cite{afandi2022ehrhart}, the proofs of the results of this work are presented. Section~\ref{sec:fstar} discusses the $f^*$-expansion, while in Section~\ref{sec:hstar} the $h^*$-expansion is provided. The combinatorial result of logarithmic concavity of the number of order-consecutive partition sequences is shown in Section~\ref{sec:log-concavity}.

\section*{Acknowledgements}
The author F. B. K. wants to thank Adam Afandi for giving the inspiration for this work and introduction to the topics as well as valuable discussions and proofreading. This work is funded by the Deutsche Forschungsgemeinschaft through the Research Training Group 2149 "GRK 2149: Strong and Weak Interactions - from Hadrons to Dark Matter" as well as supported\footnote{funded by the Deutsche Forschungsgemeinschaft (DFG, German Research Foundation) under Germany's Excellence Strategy EXC 2044 -390685587, Mathematics Münster: Dynamics-Geometry-Structure} by the Cluster of Excellence Mathematics Münster.

\section{Ehrhart theory}\label{sec:EhrhartThy}
The exposition of Ehrhart theory presented here is tailored to this paper. For an extensive introduction the textbook by Beck and Robins \cite{BeckSinai} is a good starting point. \\
The basic objects in Ehrhart theory are polytopes. Let $\{v_1, \dots , v_n\}$ be a set of vectors that span $\RR^d$. A \textit{polytope} $P$ is defined by
\begin{align}
	P = \conv[v_1, \dots, v_n] \, .
\end{align}
Given a polytope $P$, a hyperplane $\Hh$ is a supporting hyperplane of $P$, if $P$ lies entirely in one closed half-space defined by $\Hh$. Then, a \textit{face} of $P$ is given by the intersection of $P$ with a supporting hyperplane.
\begin{eg*}
	An important example of a polytope is the standard simplex $\Delta_d$ in $d$ dimensions defined by the $d$ unit vectors and the origin (see Figure~\ref{fig:standard2simplex}). 
\end{eg*}
\begin{figure}[t]
	\centering
	\includegraphics[width=0.6\linewidth]{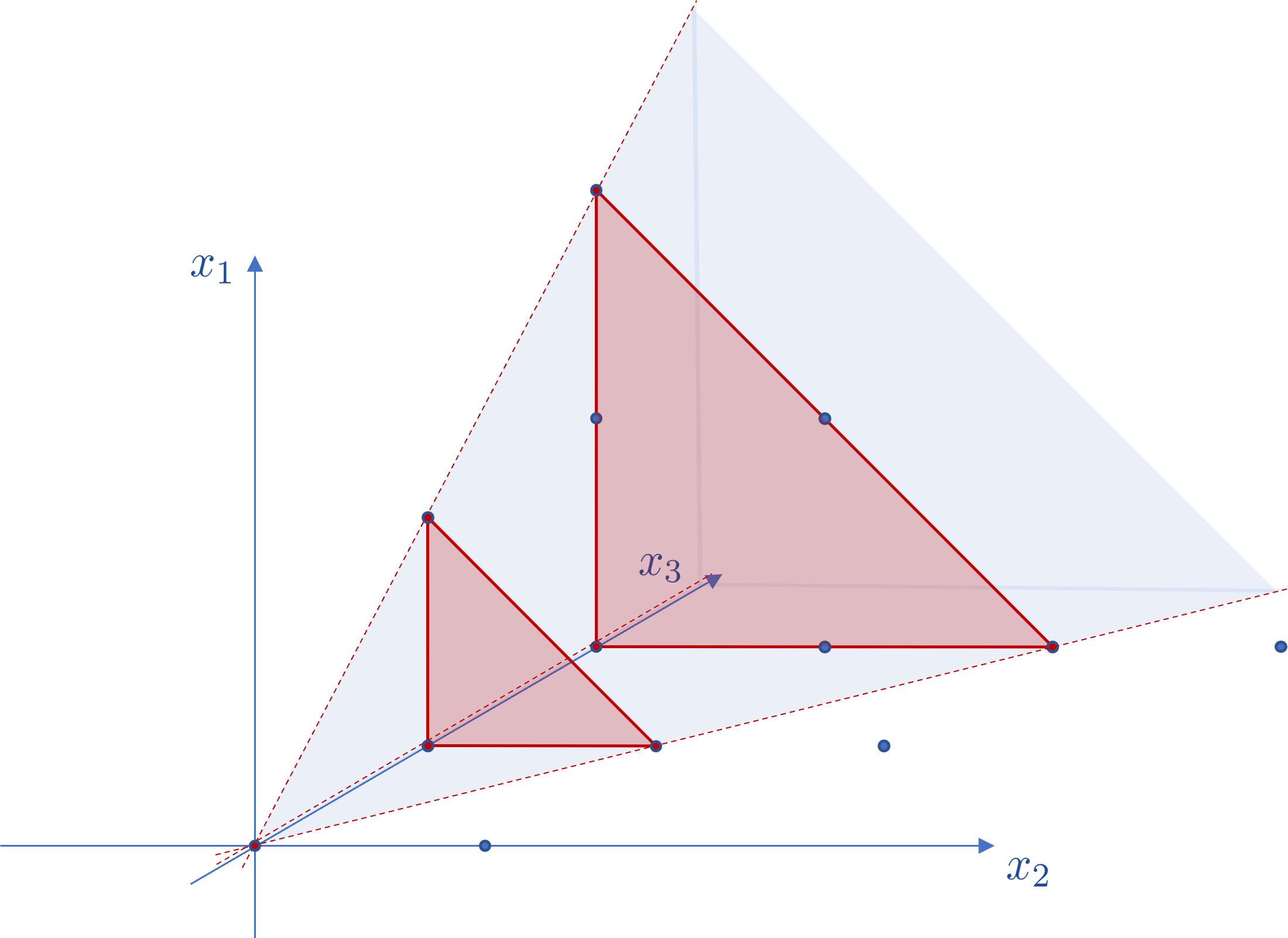}
	\caption{This illustrates the cone (shaded blue) over the standard 2-simplex. Cut out of the cone by the planes $\{x_3=1\}$ as well as $\{x_3=0\}$ and $\{x_3=2\}$ are the standard 2-simplex as well as its zeroth and second dilate, respectively (all shaded red).
	}
	\label{fig:standard2simplex}
\end{figure}

One can construct a cone over a polytope $P$ by embedding $P$ into $\RR^{d+1}$. One maps $\left.v_i\mapsto w_i=(v_i, 1)\right.$, for all $i=1, \dots, n$, and sets $\textnormal{cone}(P)= \textnormal{cone}(w_1, \dots, w_n)$. Then, its \textit{$g$'th dilate} is, for $g\in\NN$, given by 
\begin{align}
	gP \coloneqq \textnormal{cone}(P)\cap\{x_{d+1}=g\}\, . 
\end{align}
If its vertices are in $\ZZ^d$, the polytope and its dilates are called \textit{integral}. Furthermore, an \textit{open} polytope is the relative interior of a polytope. In the first two definitions, the concept of a polytope is generalized.
\begin{definition}
	An \textit{integral polytopal complex} $K$ is a finite collection of integral polytopes containing the empty polytope, such that 
	\begin{itemize}
		\item[$\left.a\right)$] if $Q$ is a face of $P\in K$, then $Q\in K$,
		\item[$\left.b\right)$] if $P, Q\in K$, then $P\cap Q$ is a face of both $P$ and $Q$. 
	\end{itemize}
	The elements of $K$, which are of maximal dimension $d$, are called faces.
\end{definition}
\begin{definition}
	An \textit{integral partial polytopal complex} $K$ of dimension $d$ is the disjoint finite union of open integral polytopes. Again, the elements of $K$ are called faces and are of maximal dimension $d$.
\end{definition}
\begin{remark}
	Note that due to the definitions above, in contrast to polytopal complexes, partial polytopal complexes are not closed under passing to faces, as some relatively open faces might be absent.
\end{remark}
\begin{remark}
	Every polytope can be thought of as a partial polytopal complex, that is as the disjoint union of the interior of its faces. Then, there is a bijection between the faces of the polytope and the partial polytopal complex. To be precise, the faces of the polytope are the relative closures of the faces of the partial polytopal complex. 
\end{remark}

The task of Ehrhart theory is to count the lattice points contained in $gP$. This count is usually encoded in a function $L_P(g)$ named in honor of Ehrhart, who initiated and built this theory \cite{Ehrhart62, Ehrhart77}. As already mentioned above, for a presentation of results obtained in Ehrhart theory and its methods the reader is kindly referred to \cite{afandi2022ehrhart, BeckSinai} and references therein. The first central theorem, which is cited here, was uncovered by Ehrhart himself for polytopes \cite{Ehrhart62} and then generalized.
\begin{theorem}[Ehrhart \cite{Ehrhart62}]\label{thm:Ehrhart}
	The Ehrhart function $L_{K}(g)$ of an integral partial polytopal $d$-complex $K$ is a rational polynomial in $g$ of degree $d$.
\end{theorem}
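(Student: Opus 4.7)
The plan is to reduce the statement to the classical Ehrhart theorem for a single closed integral polytope, and then use Ehrhart–Macdonald reciprocity to handle the open faces that appear in a partial polytopal complex. Since by definition $K$ is a disjoint finite union of open integral polytopes $\{P_j^\circ\}_{j \in J}$, additivity of lattice-point counting gives
\begin{align*}
L_K(g) \;=\; \sum_{j \in J} L_{P_j^\circ}(g),
\end{align*}
so if each $L_{P_j^\circ}$ is a polynomial in $g$ of degree $\dim P_j \leq d$, then $L_K$ is a polynomial of degree at most $d$; obtaining degree exactly $d$ will require a non-cancellation argument for the top-dimensional pieces.

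To establish polynomiality for a single closed integral polytope $P$, I would triangulate $P$ into integral simplices and apply inclusion–exclusion over the triangulation, thereby reducing to a single integral $d$-simplex $\Delta = \textnormal{conv}(v_0, \dots, v_d)$. For such a $\Delta$ I would invoke the cone construction already introduced in the paper: embed $\Delta$ at height one in $\RR^{d+1}$ via $w_i = (v_i, 1)$, form $\textnormal{cone}(\Delta)$, and observe that every lattice point in the cone admits a unique decomposition $\lambda + \sum_{i=0}^d n_i w_i$ with $n_i \in \NN$ and $\lambda$ in the half-open fundamental parallelepiped $\Pi = \{\sum t_i w_i : 0 \leq t_i < 1\}$. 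Since $\Pi \cap \ZZ^{d+1}$ is finite, the integer-point generating function factors as
\begin{align*}
\sum_{g \geq 0} L_\Delta(g)\, z^g \;=\; \frac{\sum_{\lambda \in \Pi \cap \ZZ^{d+1}} z^{\lambda_{d+1}}}{(1-z)^{d+1}},
\end{align*}
whose numerator is a polynomial in $z$ of degree at most $d$. A partial-fraction expansion of the right-hand side then shows that $L_\Delta(g)$ agrees, for every $g \in \NN$, with a rational polynomial in $g$ of degree exactly $d$.

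To pass from closed polytopes to open ones, I would appeal to Ehrhart–Macdonald reciprocity, $L_{P^\circ}(g) = (-1)^{\dim P} L_P(-g)$, which immediately transfers polynomiality and preserves the degree; alternatively, one can use the face-lattice inclusion–exclusion $L_{P^\circ}(g) = \sum_{Q \preceq P} (-1)^{\dim P - \dim Q}\, L_Q(g)$, each summand being polynomial by the simplex case. Substituting back into the additive decomposition of $L_K(g)$ gives a rational polynomial of degree at most $d$; the degree is exactly $d$ because each top-dimensional face $P_j^\circ$ with $\dim P_j = d$ contributes a leading coefficient equal to the normalized volume of $P_j$, which is strictly positive, and these positive leading coefficients cannot cancel across different pieces of the disjoint union.

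The main technical obstacle is the cone-and-generating-function step for a single integral simplex, since it requires both the half-open parallelepiped tiling and the passage from a rational function $h(z)/(1-z)^{d+1}$ to a polynomial identity in $g$; once that is in place, the triangulation, reciprocity, and additive assembly are essentially formal bookkeeping, and the degree statement follows from the positivity of normalized volumes.
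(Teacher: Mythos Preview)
The paper does not supply its own proof of this theorem; it is stated as a classical result attributed to Ehrhart, with the reader directed to the textbook of Beck and Robins for a full treatment. Your outline is essentially the standard argument one finds there: triangulate a closed integral polytope into integral simplices, reduce to a single simplex via inclusion--exclusion, compute the lattice-point generating series of a simplex through the cone construction and its half-open fundamental parallelepiped, and then assemble the open faces of the partial polytopal complex using either Ehrhart--Macdonald reciprocity or, equivalently, M\"obius inversion over the face lattice. Your degree argument is also correct: after reciprocity the leading coefficient of $L_{P_j^\circ}$ for a top-dimensional face is still the normalized volume of $P_j$, which is strictly positive, so the contributions from the $d$-dimensional pieces add rather than cancel. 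Nothing substantive is missing from your sketch; the only steps left implicit are routine (existence of a lattice triangulation without new vertices, and the partial-fraction passage from $h(z)/(1-z)^{d+1}$ to a degree-$d$ polynomial in $g$, which uses $h(1)\neq 0$).
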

This result allows applying the theory of rational polynomials to Ehrhart theory. The Ehrhart polynomial $L_K(g)$ of an integral partial polytopal $d$-complex $K$ can be expanded into binomial bases $\{\binom{g-1}{i}\}_{i\in\{0, \dots, d\}}$ or $\{\binom{g+d-i}{d}\}_{i\in\{0, \dots, d\}}$, that is 
\begin{align}\label{equ:fstar-hstar}
	L_K(g) \eqqcolon \sum_{i=0}^d f^*_i\; \binom{g-1}{i}\, , && \text{and} && L_K(g) \eqqcolon \sum_{i=0}^d h^*_i\; \binom{g+d-i}{d}\, .
\end{align}
The associated coefficients are collected in the vectors $f^*, h^*\in \QQ^{d+1}$, respectively. These vectors are the subject of intensive studies in Ehrhart theory and beyond, as information about the partial polytopal complex is encoded in them.\\
In order to interpret the numbers $f^*_i$ and $h^*_i$, a few more notions are needed. A partial polytopal complex $K$ is \textit{simplicial}, if all of its faces are simplices. Furthermore, a \textit{triangulation} of $K$ is a simplicial complex whose support is $K$. Such a triangulation of $K$ is \textit{unimodular}, if the simplices of $K$ can be mapped by an affine automorphism of $\ZZ^d$ to the standard simplex.
\begin{theorem}[{Breuer \cite[Section~2.3]{breuer2012ehrhart}}]\label{thm:fi-counting}
	If $C$ is a unimodular triangulation of a polytopal complex $K$, then the $f^*_i$ count the $i$-dimensional open simplices in $C$. \nopagebreak\\
	If, furthermore, $C$ is a disjoint union of unimodular half-open\footnote{A \textit{half-open} polytope is a set of the form $P\backslash \cup_{i=1}^l p_i$, where $P$ is a polytope	and $p_i$ are faces of $P$. Note that every such polytope is supported by a partial polytopal complex.} simplices of dimension $d$, then the $h^*_i$ count the $i$-dimensional relatively	open unimodular simplices.
\end{theorem}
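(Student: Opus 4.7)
The plan is to reduce both claims to a single atomic calculation — the Ehrhart polynomial of a standard (half-)open simplex — and then combine it with additivity of lattice-point counting over disjoint unions and invariance under unimodular equivalence. First I would compute the Ehrhart polynomial of the relative interior of the standard $i$-simplex: a lattice point of $g\cdot(\Delta^i)^\circ$ is an $(i{+}1)$-tuple of \emph{strictly positive} integers summing to $g$, and a stars-and-bars count gives exactly $\binom{g-1}{i}$ such tuples. Because any unimodular $i$-simplex arises as the image of $\Delta^i$ under an integral affine isomorphism, its relatively open version has the same Ehrhart polynomial $\binom{g-1}{i}$.

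A partial polytopal complex is by definition a disjoint union of relatively open polytopes, so if $C$ is a unimodular triangulation of $K$ then $K=\bigsqcup_{\sigma\in C}\sigma^{\circ}$, and additivity of lattice-point counts over disjoint unions yields
\begin{equation*}
    L_K(g)\;=\;\sum_{\sigma\in C} L_{\sigma^{\circ}}(g)\;=\;\sum_{i=0}^d \#\{i\text{-dim.\ simplices in }C\}\;\binom{g-1}{i}.
\end{equation*}
The polynomials $\binom{g-1}{i}$, $i=0,\dots,d$, have strictly increasing degrees and hence form a basis of $\QQ[g]_{\le d}$, so the expansion is unique and $f^*_i$ is precisely the number of $i$-dimensional open simplices in $C$.

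For the $h^*$ part, the atomic computation is that of a standard half-open $d$-simplex with $i$ of its $d{+}1$ facets removed: writing the $g$-th dilate as $\{(x_0,\dots,x_d)\in\ZZ_{\ge 0}^{d+1}:\sum x_j=g,\ x_1,\dots,x_i\ge 1\}$ and substituting $y_j=x_j-1$ for $j=1,\dots,i$ converts the count to $\binom{g+d-i}{d}$. Running the same additivity-plus-unimodularity argument over the hypothesised half-open decomposition, and using that $\{\binom{g+d-i}{d}\}_{i=0}^d$ is again a basis of $\QQ[g]_{\le d}$ (they have distinct shifted integer root sets), identifies $h^*_i$ with the number of half-open $d$-pieces in the decomposition having exactly $i$ removed facets.

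The main subtlety — and in my view the only non-routine step — is matching this count of "half-open $d$-pieces with $i$ removed facets" to Breuer's phrasing "$i$-dimensional relatively open unimodular simplices". The canonical bijection assigns to each half-open piece the unique minimal relatively open face it contains, namely the open face spanned by the vertices opposite the removed facets; with that identification the claim is immediate. Verifying that this assignment really is a bijection onto the set of open unimodular simplices of the underlying closed triangulation — equivalently, that every such open simplex is the minimum face of exactly one half-open piece — is where the combinatorial content (essentially a shelling-type argument) lives, and is the one place I would expect to spend real effort.
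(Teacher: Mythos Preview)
The paper does not supply a proof of this statement: Theorem~2.2 is quoted from Breuer \cite{breuer2012ehrhart} purely as background for the meaning of the $f^*$- and $h^*$-vectors, and no argument is given in the text. There is therefore no proof in the paper to compare your proposal against.

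For what it is worth, your outline is the standard one and is essentially how Breuer argues: compute the atomic Ehrhart counts $\binom{g-1}{i}$ for a relatively open unimodular $i$-simplex and $\binom{g+d-i}{d}$ for a half-open unimodular $d$-simplex with $i$ facets removed, then invoke additivity over the disjoint decomposition together with linear independence of the respective binomial bases. One small point to revisit if you ever write this out in full: in a half-open $d$-simplex with facets $F_{1},\dots,F_{i}$ removed, the minimal face it contains is spanned by the $i$ vertices opposite those facets and hence has dimension $i-1$, not $i$. So the bijection you describe between ``pieces with $i$ removed facets'' and ``$i$-dimensional open simplices'' is off by one as stated; the matching with Breuer's precise formulation requires a little more care in the indexing than your last paragraph suggests.
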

Note that not every integral polytopal complex can be unimodularly triangulated. In that case the interpretation of the expansion coefficients $f^*_i$ given in Theorem~\ref{thm:fi-counting} does not hold. However, the expansion of $L_P(g)$ in terms of the binomial basis of $\QQ[g]$ remains valid. \\\\
To conclude this section the expansion coefficients collected in the $f^*$- and $h^*$-vector are related. 

\subsection{Relation of $f^*$- and $h^*$-vector}\label{sec:f*-h*-relation}
The $f^*$- and $h^*$-coefficients (see equation~\eqref{equ:fstar-hstar}) of an integer-valued polynomial $p\in\QQ[k]$ of degree $d$ as defined above are generated by 
\begin{align}\label{equ:f*-h*-genfunc}
	\sum_{k\geq0} p(k) z^k = \frac{\sum_{i=0}^d h^*_i z^i}{(1-z)^{d+1}} \eqqcolon H^*(z)\, , && \text{and} && \sum_{k\geq1} p(k) z^k = \sum\nolimits_{i=0}^{d} f^*_i \frac{z^{i+1}}{(1-z)^{i+1}} \eqqcolon F^*(z) \, .
\end{align}
This can be seen as follows. 
\paragraph{$h^*$-expansion} First, expand the denominator and use the reflection relation and symmetry of binomial coefficients to find 
\begin{align}
	\frac{\sum_{i=0}^d h^*_i z^i}{(1-z)^{d+1}} =  \sum_{i=0}^d h^*_i z^i \sum_{k\geq0} \binom{d+k}{k} z^k = \sum_{i=0}^d \sum_{k\geq i} h^*_i \binom{d+k-i}{d} z^k\, ,
\end{align}
where in last step one shifts $k\mapsto k-i$. As the $k$ sum does not change when starting at zero, then, interchange the two sums to find the desired form
\begin{align}
	\sum_{k\geq 0} z^k \sum_{i=0}^d h^*_i \binom{k+d-i}{d} = \sum_{k\geq 0} z^k p(k) \, .
\end{align}

\paragraph{$f^*$-expansion} In a parallel fashion one shows the expression for the $f^*$-vector using 
\begin{align}
	\frac{z^{j+1}}{(1-z)^{j+1}} = \sum_{k\geq1} \binom{k-1}{j} z^k\, .
\end{align}

\begin{remark}
	The expressions~\eqref{equ:f*-h*-genfunc} can already be found in \cite{breuer2012ehrhart}\footnote{The $h^*$-expression is a classical result by Stanley, while the idea of generating functions for the $f^*$- and $h^*$-vector goes already back to Ehrhart.}. There, the definitions are different to the present work. This concerns in particular the binomial coefficient $\binom{n}{m}$ beyond the classical domain $n\geq 0$ and $0\leq m\leq n$. The fact that the combinatorial interpretation breaks down for negative $n$ suggests $\binom{n}{m}=0$ for all $n<0$. This is in conflict with the continuation of the binomial coefficient using the $\Gamma$-function to negative arguments. In order to adapt to this, the sum in~\eqref{equ:f*-h*-genfunc} begins at $k=1$. 
\end{remark}
Using the two generating series defined in equation~\eqref{equ:f*-h*-genfunc} one can relate the $f^*$- and $h^*$-coefficients via $H^*(z) = p(0) + F^*(z)$. Therefore, 
\begin{align}\label{equ:h*-to-f*}
	\sum_{i=0}^d h^*_i z^i &= p(0)(1-z)^{n+1} + \sum_{i=0}^{d} f^*_i z^{i+1}(1-z)^{d-i}\, ,
\end{align}
which can be interpreted as artificially adding the term $f^*_{-1}=p(0)$.

\section{Moduli space of curves}\label{sec:ModuliSpace}
In the following, the moduli space of stable $n$-pointed curves is briefly introduced. For a more comprehensive account of the theory as well as computations, the reader may take a look into references~\cite{Zvonkine23, Pandharipande15, Kock21}. \\
The compactified moduli space $\Mgnbar$ of stable $n$-pointed genus $g$ complex curves is a smooth Deligne-Mumford stack of dimension $d_{g, n} = 3(g-1)+n$. Stability requires the automorphism group of the curves to be finite, which is provided as long as $\chi_{g, n} = 2(1-g)-n < 0$. The interior of $\Mgnbar$ is denoted by $\Mgn$ and contains all smooth curves. The boundary components parametrize nodal degenerations of smooth curves, such that every component, again, is stable. This introduces the well known stratification of $\Mgnbar$. \\\\
Investigating this space oftentimes amounts to studying various classes defined on it and their intersection numbers. A large class of so-called tautological intersection numbers can be reduced to those of $\psi$-classes. To define these, let $(C; p_1, \dots, p_n)$ be a curve in $\Mgnbar$ with marked points $(p_1, \dots, p_n)$. Furthermore, let $\LL_i$ be the line bundle on $\Mgnbar$ whose fibre over $(C; p_1, \dots, p_n)$ is the cotangent space $T^*_{p_i}C$ to the curve $C$ at the $i$'th marked point $p_i$. 
\begin{definition}
	For $1\leq i \leq n$, the \textit{$\psi$-class} $\psi_i = c_1(\LL_i)$ is the first Chern class of the cotangent bundle $\LL_i$. Furthermore, for non-negative integers $\{d_1, \dots, d_n\}$ such that $\sum_{i=1}^n d_i = d_{g, n}$, \textit{intersection numbers of $\psi$-classes} are denoted by
	\begin{align}
		\angles[\tau_{d_1}\dots \tau_{d_n}, {g, n}] = \int_{\Mgnbar} \psi_1^{d_1} \dots \psi_n^{d_n} \in \QQ\, .
	\end{align}
\end{definition}
These numbers are subject to the recursive structure of Virasoro constraints \cite{Kontsevich:1992ti}. The latter are an intricate system of equations that provide a tool to compute all $\psi$-class intersection numbers starting from two base cases of $(g, n)= (0, 3)$ and $(1, 1)$.\\
In this work only intersection numbers with $d_i\equiv 1$ will be considered. Therefore, the Virasoro constraints are not fully recalled here except for the \textit{Dilaton equation}
\begin{align}\label{equ:dilaton-equation}
	\angles[\tau_{d_1}\dots \tau_{d_n}\tau_1, {g, n+1}] = (2(g-1)+n) \angles[\tau_{d_1}\dots \tau_{d_n}, {g, n}]\, .
\end{align} 

\section{On Ehrhart polynomials of unit-power $d_i=1$ intersection numbers}\label{sec:Results}
As already stated in the introduction, it was recently shown in \cite{afandi2022ehrhart} that the information about $\psi$-class intersection numbers is entirely determined by the family of maps
\begin{align*}
	\left\{\begin{array}{ccccc}
		\NN^n && \to && \textnormal{Ehr} \\ \vec{d} && \mapsto && L_{\vec{d}}(g+m)
	\end{array} \right\}_{n\in\NN}\, ,
\end{align*}
where $\textnormal{Ehr}$ denotes the space of Ehrhart polynomials. These maps encode the intersection numbers specified by the array $\vec{d}= (d_1, \dots, d_n)$ in the shifted\footnote{The shift is given by $m= \ceil{(-(n+1)+\sum_{i=1}^n d_i)/3}$, which vanishes for $\vec{d}=(1, \dots, 1)$.} Ehrhart polynomial $L_{\vec{d}}(g+m)$ of a partial polytopal complex. \\
This remarkable result provides an intriguing perspective on tautological intersection numbers and naturally raises the question about what types of partial polytopal complexes correspond to various moduli space data. A partial answer to this for the case of $\vec{d}= (1, \dots, 1)$ is provided in this section in terms of a combinatorial interpretation of the $f^*$- and $h^*$-vector. \\
In this restricted situation the main theorem of \cite{afandi2022ehrhart} reads
\begin{theorem}[{Afandi \cite[Theorem~1]{afandi2022ehrhart}}]\label{thm:adam}
	Let $n\in\NN^\times$. Then there exists a partial polytopal complex $\Pp_n$ of dimension $n$ such that 
	\begin{align}
		24^g (g!) 3^n \angles[(\tau_1)^n  \tau_{3(g-1)+1}, {g}] = L_{\Pp_n}(g)\, ,
	\end{align}
	where $L_{\Pp_n}(g)$ is the integer-valued Ehrhart polynomial of $\Pp_n$. 
\end{theorem}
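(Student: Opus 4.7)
My plan is to reduce the intersection number on the left to an explicit polynomial in $g$ using only the Dilaton equation and a classical base-case value, and then realise the resulting polynomial as the Ehrhart polynomial of an explicit partial polytopal complex built as a Cartesian product of $n$ one-dimensional complexes.

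First I would iteratively apply the Dilaton equation~\eqref{equ:dilaton-equation} to $\angles[(\tau_1)^n \tau_{3(g-1)+1}, {g}]$, peeling off one $\tau_1$ insertion at each step. The $j$-th application contributes the factor $2(g-1)+(n+1-j)$ and lands in the moduli space $\Mgnbar[g, n+1-j]$, so after $n$ iterations
\begin{align*}
\angles[(\tau_1)^n \tau_{3(g-1)+1}, {g, n+1}] = \prod_{k=1}^n (2g-2+k) \cdot \angles[\tau_{3(g-1)+1}, {g, 1}].
\end{align*}
Substituting the classical Witten--Kontsevich value $\angles[\tau_{3g-2}, {g, 1}] = 1/(24^g g!)$ and multiplying through by $24^g(g!)\cdot 3^n$ gives
\begin{align*}
L_{\Pp_n}(g) = \prod_{k=1}^n 3(2g-2+k) = \prod_{k=1}^n \bigl(6g + 3(k-2)\bigr),
\end{align*}
which is a manifestly integer-valued polynomial in $g$ of degree exactly~$n$.

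Next I would realise this polynomial as the Ehrhart polynomial of an explicit partial polytopal complex via a product construction. For each $k = 1, \dots, n$, let $I_k$ be the one-dimensional integer partial polytopal complex consisting of $3k$ disjoint isolated lattice points (each an open $0$-simplex, with Ehrhart polynomial $1$) together with $6$ disjoint relatively open unit segments (each an open standard $1$-simplex, with Ehrhart polynomial $g-1$), so that
\begin{align*}
L_{I_k}(g) = 3k + 6(g-1) = 6g + 3(k-2).
\end{align*}
Then set $\Pp_n := I_1 \times I_2 \times \cdots \times I_n$. Because the relative interior of a product of integer polytopes is the product of their relative interiors, $\Pp_n$ is again a disjoint finite union of open integer polytopes, hence an integer partial polytopal complex; its maximal-dimensional strata are products of one open segment from each factor, so $\dim \Pp_n = n$; and lattice-point counts multiply across Cartesian products, so $L_{\Pp_n}(g) = \prod_{k=1}^n L_{I_k}(g)$ matches the polynomial computed above.

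The only genuinely non-elementary input is the base-case identity $\angles[\tau_{3g-2}, {g, 1}] = 1/(24^g g!)$, which I would cite directly as a classical consequence of the Virasoro constraints rather than rederive. Beyond that, the main obstacle --- and the reason this direct argument is really only satisfactory for the unit-powers case --- is that the product construction is ad-hoc: $\Pp_n$ matches the polynomial by design but carries no intrinsic geometric meaning relative to the stratification of $\Mgnbar[g, n+1]$. A more conceptual proof, as I expect is required in the general-$\vec d$ setting where the polynomial no longer factorises so cleanly, would build $\Pp_n$ intrinsically and establish positivity of its $f^*$-vector as a structural feature; but for the existence claim stated here, the explicit product above suffices.
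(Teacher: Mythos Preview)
The paper does not prove Theorem~\ref{thm:adam} at all --- it is quoted as a special case of Afandi's general result and the reader is referred to \cite{afandi2022ehrhart} for the proof. What the paper \emph{does} do, in the Proposition immediately following, is exactly the first half of your argument: iterate the Dilaton equation down to the base case $\angles[\tau_{3(g-1)+1}, {g}]=1/(g!\,24^g)$ to obtain $L_n(g)=\prod_{k=1}^n(6g+3k-6)$. So on the analytic side your computation and the paper's agree verbatim.

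Your second half --- the explicit product construction $\Pp_n=I_1\times\cdots\times I_n$ with $I_k$ built from $3k$ open $0$-simplices and $6$ open unit $1$-simplices --- is genuinely additional content that neither this paper nor (presumably) Afandi's general argument supplies. It is correct: each $I_k$ is a one-dimensional integral partial polytopal complex with $L_{I_k}(g)=3k+6(g-1)=6g+3(k-2)$; products of relatively open integral polytopes are again relatively open integral polytopes; disjointness and lattice-point counts both factor through Cartesian products; and the top-dimensional cells are $n$-cubes, giving $\dim\Pp_n=n$. This yields a fully self-contained, elementary proof of the existence statement in the $\vec d=(1,\dots,1)$ case, bypassing Afandi's general machinery entirely. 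The trade-off is exactly the one you identify: Afandi's proof covers arbitrary $\vec d$ (where the polynomial does not factor linearly over $\ZZ$ and no such product construction is available), whereas your argument is tailored to, and only works for, this one family.
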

For details about Theorem~\ref{thm:adam} and its most general version the reader should refer to the original work. There the author provides a detailed and intelligible introduction to the topics as well as further discussion of the result for both algebraic geometers and combinatorialists.
\begin{center}\usebox{\verticalseparator}\end{center}
In the following, the Ehrhart polynomial $L_{\Pp_n}$ is further investigated. Therefore, first, an explicit representation is given.
\begin{proposition}
	In the setting of Theorem~\ref{thm:adam}, let $L_n(g) \coloneqq L_{\Pp_n}(g)$. Then, 
	\begin{align}
		L_n(g) = 3^n \prod_{k=1}^{n} (2(g-1)+k) = \prod_{k=1}^{n} (6g+(3k-6))
	\end{align}
\end{proposition}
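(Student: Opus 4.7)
The plan is to start from Afandi's theorem, which directly identifies $L_n(g)$ with a specific $\psi$-class intersection number, and then peel off the $\tau_1$ insertions one by one using the Dilaton equation. By Theorem~\ref{thm:adam},
$$L_n(g) = 24^g\, g!\, 3^n\, \angles[(\tau_1)^n \tau_{3(g-1)+1}, {g, n+1}],$$
so the task reduces to evaluating this intersection number in closed form.

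Next I would apply the Dilaton equation~\eqref{equ:dilaton-equation} iteratively. Starting from $n+1$ marked points and stripping off one $\tau_1$ factor at a time, the remaining number of marked points decreases from $n$ down to $1$, and each step contributes a multiplicative factor of the form $2(g-1)+($number of remaining points$)$. After $n$ applications this yields
$$\angles[(\tau_1)^n \tau_{3(g-1)+1}, {g, n+1}] = \left[\prod_{k=1}^{n}(2(g-1)+k)\right] \angles[\tau_{3g-2}, {g, 1}].$$
To close the argument I would invoke the classical one-point Witten-Kontsevich evaluation $\angles[\tau_{3g-2}, {g, 1}] = 1/(24^g g!)$, which is a standard consequence of the full set of Virasoro constraints (only the Dilaton equation is recalled explicitly in this paper, so strictly speaking this needs to be cited as a known input).

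Substituting back, the normalization factor $24^g g!$ from Afandi's theorem cancels precisely against the one-point value, and one obtains the first equality
$$L_n(g) = 3^n \prod_{k=1}^n (2(g-1)+k).$$
The second equality is purely algebraic: distributing one factor of $3$ into each of the $n$ terms gives $\prod_{k=1}^n 3(2(g-1)+k) = \prod_{k=1}^n(6g-6+3k) = \prod_{k=1}^n(6g+(3k-6))$, completing the proof.

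The proof is essentially a routine reduction; no deep obstacle arises once Afandi's identification is in place. The only non-elementary ingredient is the one-point function $\angles[\tau_{3g-2}, {g, 1}] = 1/(24^g g!)$, which the Dilaton equation alone cannot produce (it relates only numbers of fixed genus), so this will have to be supplied as an external input from Witten-Kontsevich theory.
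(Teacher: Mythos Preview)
Your proof is correct and follows exactly the same route as the paper: iterate the Dilaton equation to strip off the $\tau_1$'s, producing the factors $2(g-1)+k$, and then invoke the one-point value $\angles[\tau_{3g-2}, {g}]=1/(24^g g!)$ as the base case. You simply spell out in more detail what the paper compresses into two sentences, and your remark that this base case requires input beyond the Dilaton equation is accurate.
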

\begin{proof}
	This proposition is achieved iteratively using the Dilaton equation~\eqref{equ:dilaton-equation}. Removing the $\psi$-classes step by step one produces in each step one factor of the product in the proposition. The result is completed through the base case $\angles[\tau_{3(g-1)+1}, g]=\frac{1}{g!\,24^g}$.
\end{proof}
In order to find an enumerative interpretation of the $f^*$- and $h^*$-vector of the Ehrhart polynomials $L_n(g)$, an appropriate normalization is provided. Using the representation of $L_n(g)$ found from the algebro-geometric interpretation, define $\Ll_n$ so that
\begin{align}\label{eq:L-norm}
	L_n(g) = 3^n (n!) \Ll_n(g)\, .
\end{align}
\begin{proposition}
	The normalized Ehrhart function $\Ll_n(g)$ defined through equation~\eqref{eq:L-norm} is an integer-valued polynomial. 
\end{proposition}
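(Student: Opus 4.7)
The plan is to compute $\Ll_n(g)$ in closed form and recognize it as a single binomial coefficient. From the preceding proposition, $L_n(g) = 3^n \prod_{k=1}^{n}(2(g-1)+k)$, so dividing by $3^n\, n!$ as prescribed by equation~\eqref{eq:L-norm} yields
\begin{align*}
	\Ll_n(g) \;=\; \frac{1}{n!}\prod_{k=1}^{n}\bigl(2g-2+k\bigr) \;=\; \frac{(2g-1)(2g)(2g+1)\cdots(2g+n-2)}{n!}.
\end{align*}
This is manifestly a polynomial in $g$ of degree $n$ with rational coefficients, so to verify that it is integer-valued it suffices to show $\Ll_n(g)\in\ZZ$ for every $g\in\ZZ$.

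The key observation is that for integer $g$, the numerator is a product of $n$ consecutive integers, namely $2g-1,\ 2g,\ 2g+1,\ \dots,\ 2g+n-2$. Any product of $n$ consecutive integers is divisible by $n!$; more precisely, it equals $n!\binom{2g+n-2}{n}$, where $\binom{N}{n}$ denotes the usual binomial coefficient extended to arbitrary integer $N$ via $\binom{N}{n} = N(N-1)\cdots(N-n+1)/n!$. This extended binomial coefficient always takes integer values on integer $N$, as can be seen either directly (using $\binom{-N-1}{n}=(-1)^n\binom{N+n}{n}$ to reduce to the usual non-negative case) or by noting that among any $n$ consecutive integers the elementary divisibility argument for $n!$ still applies. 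Thus
\begin{align*}
	\Ll_n(g) \;=\; \binom{2g+n-2}{n} \;\in\; \ZZ \qquad\text{for every } g\in\ZZ.
\end{align*}

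Since $\Ll_n(g)$ is a polynomial with rational coefficients that takes integer values on all of $\ZZ$, it is an integer-valued polynomial in the standard sense, which is what was to be proved. There is no real obstacle in this argument; the only slightly non-trivial ingredient is the well-known fact that $n$ consecutive integers are divisible by $n!$, which is the content of the integrality of the extended binomial coefficient used above.
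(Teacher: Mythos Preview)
Your proof is correct and follows essentially the same route as the paper: both recognize $\Ll_n(g)$ as the product of $n$ consecutive integers divided by $n!$ and identify it with the binomial coefficient $\binom{2g+n-2}{n}$. If anything, you are slightly more careful than the paper in justifying integrality for all $g\in\ZZ$ (including $g\le 0$), whereas the paper's factorial manipulation tacitly assumes $2(g-1)\ge 0$.
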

\begin{proof}
	Note that the coefficients of the polynomial in a monomial basis are in general not divisible by $n!$ as opposed to the $f^*_i$. Therefore, it is suggestive to find an expression in terms of binomial coefficients. This is provided by considering
	\begin{align}
		3^{-n} L_n(g) = \prod_{k=1}^n (2(g-1)+k)&= \frac{(2(g-1)+n)!}{(2(g-1))!} \nonumber\\
		&= n! \cdot \frac{(2(g-1)+n)!}{n!(2(g-1)+n-n)!} = n! \binom{2(g-1)+n}{n}\, .
	\end{align}
\end{proof}
The constructive proof of this proposition explicitly gives 
\begin{align}
	\Ll_n(g) = \binom{2(g-1)+n}{n}\, .
\end{align}

\subsection{$f^*$ expansion}\label{sec:fstar}
In this section it is shown that the $f^*$-vector of the normalized Ehrhart polynomial $\Ll_n(g)$ counts order-consecutive partition sequences (see theorem below). The first subsection~\ref{sec:combinatorial-definitions} introduces the relevant combinatorial definitions and facts, followed by Subsection~\ref{sec:proof-f*-OCPS} with the proof of the theorem.
\begin{theorem}\label{thm:f*-OCPS}
	The $f^*$-vector $(f^*_i)_{i=0, \dots, n}$ of the normalized Ehrhart polynomial $\Ll_n(g)$ associated to $\psi$-class intersection numbers of powers $\vec{d}=(1, \dots, 1)\in \NN^n$ computes the number of order consecutive partition sequences of $[n+1]$ into $(i+1)$ parts, that is 
	\begin{align}
		\Ll_n(g) = \sum_{i=0}^{n} f^*_i\; \binom{g-1}{i} \, , && \textnormal{with} && f^*_i = \textnormal{OCPS}^{(n+1)}(i+1) = \sum_{k=0}^{i} (-1)^{i+k} \binom{2k+n}{n} \binom{i}{k}\, .
	\end{align}
\end{theorem}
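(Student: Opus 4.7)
I would split the theorem into an algebraic step, which produces the claimed closed form, and a combinatorial step, which identifies that closed form with the OCPS count.

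For the algebraic step, I start from the explicit formula $\Ll_n(g) = \binom{2(g-1)+n}{n}$ established just above the statement. Evaluating the ansatz
$$\Ll_n(g) = \sum_{i=0}^{n} f^*_i \binom{g-1}{i}$$
at the integer points $g = k+1$ for $k = 0, 1, \dots, n$ produces the lower-triangular system
$$\binom{2k+n}{n} \;=\; \Ll_n(k+1) \;=\; \sum_{i=0}^{k} f^*_i \binom{k}{i}.$$
Classical binomial inversion then yields
$$f^*_i = \sum_{k=0}^{i} (-1)^{i-k} \binom{i}{k} \binom{2k+n}{n},$$
which, since $(-1)^{i-k} = (-1)^{i+k}$, is precisely the closed form in the statement. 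Because $\{\binom{g-1}{i}\}_{i=0}^{n}$ is a basis of the space of polynomials of degree at most $n$, this determines the $f^*_i$ uniquely, so no further check is needed.

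For the combinatorial step, I would invoke the known enumeration of order-consecutive partitions (going back to Hwang and Mallows), namely
$$\textnormal{OCPS}^{(m)}(p) = \sum_{k=0}^{p-1} (-1)^{p-1-k} \binom{p-1}{k} \binom{m-1+2k}{m-1}.$$
Substituting $m = n+1$ and $p = i+1$ matches the algebraic formula above term-by-term, completing the identification $f^*_i = \textnormal{OCPS}^{(n+1)}(i+1)$. If one prefers to stay self-contained, the same formula can be obtained from first principles via inclusion-exclusion on the positions of the ordered block dividers of an OCPS, exploiting the order-consecutivity condition from Definitions~\ref{def:consecutive-sequence} and~\ref{def:order-consecutive}.

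The main obstacle is the combinatorial step rather than the algebraic one: the binomial inversion is purely mechanical, but recognizing the alternating sum as an OCPS count either requires citing the existing combinatorial literature or redoing the inclusion-exclusion by hand on the block structure. I would expect the paper to take the former route, since the same closed form is reused in Section~\ref{sec:log-concavity} for the log-concavity theorem, so having it available as a stand-alone combinatorial fact pays off downstream.
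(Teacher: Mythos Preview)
Your proposal is correct and essentially identical to the paper's proof: what you call ``binomial inversion'' applied after evaluating at $g=k+1$ is exactly the Gregory--Newton interpolation formula the paper invokes (with $p(g)=\Ll_n(g+1)$ and then the shift $g\mapsto g-1$), and both arguments finish by citing the Hwang--Mallows formula (Theorem~\ref{thm:OCPS-counting}) for the OCPS identification. The only cosmetic difference is that the paper names the interpolation formula explicitly, whereas you phrase it as solving a lower-triangular linear system.
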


\subsubsection{Combinatorial definitions}\label{sec:combinatorial-definitions}
In order to prove this result, the notion of order-consecutive partition sequences is introduced. These combinatorial objects were first investigated in \cite{Chakravarty1985,HWANG1995323}, which the following mainly follows.\\
Start with a $p$-part partition $S^n_p=\{s_1, \dots, s_p\}$ of the ordered set $N_n = \{1, \dots,  n\}$.
\begin{definition}\label{def:consecutive-sequence}
	Let $n, p\in\NN^\times$. A partition $S^n_p$ of the ordered set $N_n$ is a \textit{partition sequence}, if the parts $s_k$ of $S^n_p$ have a fixed order, denoted by $(s_1, \dots, s_p)$.
\end{definition}
\begin{definition}\label{def:order-consecutive}
	In the setting of Definition~\ref{def:consecutive-sequence} a partition sequence is \textit{order-consecutive}, if for all $k=1, \dots , p$ the union $\cup_{i=1}^{k} s_k$ is consecutive. 
\end{definition}

\begin{eg*}
	In order to illustrate the definition above, consider $N_{5}= \{1, \dots, 5\}$ partitioned into three parts by the partition sequence $S=(s_1, s_2, s_3)$ such that
	\begin{align}
		s_1=\{2\}\, , && s_2=\{3, 4\}\, , && s_3 = \{1, 5\}\, .
	\end{align}
	$S$ is order-consecutive as
	\begin{align*}
		\begin{array}{r|ccccc}
			s_1 &&2 &&&\\
			\gray s_1 \cup \black s_2 &&{\gray 2}&3 &4 &\\
			\gray s_1\cup s_2\cup\black s_3 &1 &{\gray 2}&\gray 3&\gray 4&5\, .
		\end{array}
	\end{align*}
	In the same setting, consider as a counterexample $\tilde{S}=(\tilde{s}_1, \tilde{s}_2, \tilde{s}_3)$ defined by 
	\begin{align}
		\tilde{s}_1=\{2\}\, , && \tilde{s}_2=\{3, 5\}\, , && \tilde{s}_3 = \{1, 4\}\, .
	\end{align}
	That this partition sequence is not order-consecutive can be seen through 
	\begin{align*}
		\begin{array}{cr|ccccc}
			&\tilde{s}_1 &&2 &&&\\
			\red \to\;\; &\gray \tilde{s}_1 \cup \black \tilde{s}_2 &&{\gray 2}&3 & \red \_\_ & \black 5 \white\, . \\
			&\gray \tilde{s}_1\cup \tilde{s}_2\cup\black \tilde{s}_3 &1 &{\gray 2}&\gray 3&\black 4&\gray 5\, .
		\end{array}
	\end{align*}
	As indicated, $\tilde{s}_1 \cup \tilde{s}_2 = \{2, 3, 5\}$ is \textit{not} consecutive.
\end{eg*}
Finally, the counting result about order-consecutive partition sequences is stated.
\begin{theorem}[{Hwang-Mallows \cite[Theorem~7]{HWANG1995323}}]\label{thm:OCPS-counting}
	Let $n, p\in\NN^\times$. Then the number of order-consecutive partition sequences of an ordered set with $n$ elements into $p$ parts is 
	\begin{align}
		\textnormal{OCPS}^{(n)}(p) = \sum_{k=0}^{p-1} (-1)^{p-1-k} \binom{p-1}{k} \binom{n-1+2k}{2k}\, .
	\end{align}
\end{theorem}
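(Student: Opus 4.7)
The plan is to replace the partition sequence $(s_1, \dots, s_p)$ by the sequence of prefix unions $T_k := s_1 \cup \cdots \cup s_k$, so that the order-consecutive condition becomes precisely the statement that each $T_k$ is an interval $[a_k, b_k] \subseteq [1, n]$. Since the $s_k$ are the nonempty parts of a partition of $[1, n]$, one obtains a strict chain $T_1 \subsetneq T_2 \subsetneq \cdots \subsetneq T_p = [1, n]$, and conversely every such chain recovers an OCPS via $s_k := T_k \setminus T_{k-1}$. Hence $\textnormal{OCPS}^{(n)}(p)$ equals the number of strictly increasing chains of nonempty sub-intervals of $[1, n]$ of length $p$ terminating at $[1, n]$.

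Next I would parametrize such a chain by its left- and right-growth increments
\begin{align*}
    \alpha_i := a_i - a_{i+1}, \qquad \beta_i := b_{i+1} - b_i, \qquad 1 \leq i \leq p-1,
\end{align*}
with boundary values $a_p = 1$ and $b_p = n$. Monotonicity of the endpoint sequences gives $\alpha_i, \beta_i \in \NN$; the strict containment $T_i \subsetneq T_{i+1}$ is equivalent to $(\alpha_i, \beta_i) \neq (0, 0)$; and the single inequality $a_1 \leq b_1$ unpacks to $\sum_{i=1}^{p-1}(\alpha_i + \beta_i) \leq n - 1$, while the remaining constraints $a_i \leq b_i$ for $i > 1$ follow automatically from $a_i \leq a_1 \leq b_1 \leq b_i$. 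Consequently $\textnormal{OCPS}^{(n)}(p)$ equals the number of sequences in $(\NN^2 \setminus \{(0, 0)\})^{p-1}$ whose coordinates sum to at most $n - 1$.

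I would then compute this count by inclusion-exclusion over the subset $S \subseteq \{1, \dots, p-1\}$ of indices where the pair $(\alpha_i, \beta_i)$ is forced to be $(0, 0)$. With $p - 1 - k$ indices frozen, the remaining $2k$ non-negative variables must satisfy a single inequality $x_1 + \cdots + x_{2k} \leq n - 1$, which has $\binom{n - 1 + 2k}{2k}$ solutions. Summing over $k$ with the sign $(-1)^{p-1-k}$ produced by inclusion-exclusion yields precisely
\begin{align*}
    \textnormal{OCPS}^{(n)}(p) = \sum_{k=0}^{p-1} (-1)^{p-1-k} \binom{p-1}{k} \binom{n - 1 + 2k}{2k}.
\end{align*}

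The main obstacle is the bookkeeping in the first two steps: establishing the bijection between order-consecutive partition sequences and strict interval chains cleanly, and then reducing the a priori large system of inequalities $\{a_i \leq b_i\}$ together with strict inclusion to a single linear inequality in the increments plus the pointwise non-triviality conditions $(\alpha_i, \beta_i) \neq (0, 0)$. Once these reductions are carried out the final inclusion-exclusion is essentially mechanical, and the alternating-binomial form of the answer is forced by the counting scheme.
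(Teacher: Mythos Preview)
Your argument is correct. The bijection between order-consecutive partition sequences and strict chains of sub-intervals ending at $[1,n]$ is clean, and your reduction of the system of endpoint inequalities to the single constraint $\sum_i(\alpha_i+\beta_i)\le n-1$ together with the local non-triviality conditions $(\alpha_i,\beta_i)\neq(0,0)$ is exactly right: the redundancy of the intermediate constraints $a_i\le b_i$ follows from the monotonicity of the endpoint sequences, as you note. The inclusion--exclusion over the set of ``frozen'' indices then immediately produces the alternating sum with the stars-and-bars count $\binom{n-1+2k}{2k}$.

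As for comparison: the present paper does \emph{not} give its own proof of this statement. Theorem~\ref{thm:OCPS-counting} is quoted verbatim from Hwang--Mallows \cite[Theorem~7]{HWANG1995323} and used only as input for the proof of Theorem~\ref{thm:f*-OCPS}. So there is nothing in the paper to compare against. Your proof is a self-contained combinatorial derivation that would make the paper independent of the cited result; in spirit it is close to the original Hwang--Mallows argument (which also proceeds via an interval description and an alternating sum), but packaged more directly through the increment variables $(\alpha_i,\beta_i)$, which makes the appearance of the factor $\binom{n-1+2k}{2k}$ completely transparent.
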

\textit{Logarithmic concavity} of these numbers is shown in Section~\ref{sec:log-concavity}. \\\\
Computing the first few $\textnormal{OCPS}^{(n)}(p)$ one gets 
\begin{align*}
	\begin{tabular}{c|cccccc}
		\diagbox{$n$}{$p$} &1&2 &3 &4  &5  &6 \\
		\hline
		1&1&  &  &   &   & \\
		2&1&2 &  &   &   & \\
		3&1&5 &4 &   &   & \\
		4&1&9 &16&8  &   & \\
		5&1&14&41&44 &16 & \\
		6&\;\;1\;\;&\;\;20\;\;&\;\;85\;\;&\;\;146\;\;&\;\;112\;\;&\;\;32 
	\end{tabular}
\end{align*}

\subsubsection{Proof of Theorem 4.2}\label{sec:proof-f*-OCPS}
Equipped with the information of the previous section, one can prove Theorem~\ref{thm:f*-OCPS}. This proof relies on the \textit{Gregory-Newton interpolation formula} \cite{newtonformulaFraser1920}. It states that an integer-valued polynomial $p(t)$ in $t$ of degree $d$ can uniquely be expressed in terms of binomials via 
\begin{align}\label{equ:newton-gregory}
	p(t) = \sum_{r=0}^{d} a_r \binom{t}{r}\, ,  && \textnormal{with} && a_r = \sum_{s=0}^{r} (-1)^{r-s} \binom{r}{s} p(s)\, .
\end{align}
\begin{proof}[Proof of Theorem~\ref{thm:f*-OCPS}]
	The assertion is proven in an analytic fashion by means of the Gregory-Newton interpolation formula~\eqref{equ:newton-gregory}. In this context $\Ll_n(g+1) = \binom{2g+n}{n}$ takes the role of $p(g)$. This defines the coefficients $a^{(n)}_r$ via
	\begin{align}
		\binom{2g+n}{n} \eqqcolon \sum_{r=0}^n a_r^{(n)} \; \binom{g}{r}\, .
	\end{align}
	By the interpolation formula the expansion coefficients are determined as
	\begin{align}
		a_r^{(n)} &= \sum_{k=0}^r (-1)^{r-k} \binom{2k+n}{n}\binom{r}{k} = \sum_{k=0}^r (-1)^{r-k} \binom{n+2k}{2k}\binom{r}{k}\, ,
	\end{align}
	using a symmetry of the binomial coefficients. This implies 
	\begin{align}
		\sum_{r=0}^n \left[\sum_{k=0}^r (-1)^{r-k} \binom{n+2k}{2k}\binom{r}{k}\right] \binom{g}{r} = \binom{2g+n}{n} = \Ll_n(g+1)\, .
	\end{align}
	One obtains the assertion of the theorem by shifting $g\mapsto (g-1)$ and Theorem~\ref{thm:OCPS-counting}.
\end{proof}

\subsection{Logarithmic concavity}\label{sec:log-concavity}
In this section logarithmic concavity of the $f^*$ vector of the normalized Ehrhart polynomial $\Ll_n(p)$ is shown. \\
Logarithmic concavity (log-concavity) is a distinguished property of sequences. In the past many important sequences have been shown to have this property with examples throughout various fields, see reviews by Stanley \cite{Stanley89}, Brenti \cite{Brenti94} and Brändén \cite{Braenden14}. 
\begin{definition}
	A sequence of real numbers $(a_n)_{n\in\NN}$ is \textit{log-concave}, if for all $n$ 
	\begin{align}
		(a_n)^2 \geq a_{n+1}a_{n-1}\, .
	\end{align}
\end{definition}
In order to show log-concavity of the number of order-consecutive partition sequences, here, their generating function is investigated. It is a well known result\footnote{This is for instance stated in the introduction of \cite{Braun2016}.} that, if the generating polynomial of some sequence of numbers has only real zeros, the sequence is log-concave.\\
In the following, a proof of a recurrence relation for the count of order-consecutive partition sequences is provided in order to deduce the generating function. Then, the zeros of this generating function will be located by applying an appropriate change of variables.

\subsubsection{Generating function for OCPS}
Arrange the numbers of order-consecutive partition sequences $\text{OCPS}^{(n+1)}(p+1)$ into the generating functions 
\begin{align}
	G(x, y) \coloneqq \sum_{n, p\in \NN} \text{OCPS}^{(n+1)}(p+1)\; x^n y^p\, , && 
	g_n(y) \coloneqq \sum_{p\in\NN} \text{OCPS}^{(n+1)}(p+1)\; y^p\, .
\end{align}
By definition, these two functions are related by 
\begin{align}
	g_n(y) = \frac{1}{n!} \left.\pdv[n]{G(x, y)}{x}\right|_{x=0}\,.
\end{align}
\begin{remark}\label{rem:g-gn}
	$G(x, y)$ is a power series in $x$. Each coefficient of this power series, that is $g_n(y)$, is a polynomial in $y$, since as long as $p>n$ the coefficient of $x^ny^p$ of $G$, that is $\text{OCPS}^{(n+1)}(p+1)$, vanishes.
\end{remark}
The following propositions~\ref{prop:generatingfunction} and~\ref{prop:generatingfunction-g} provide a closed expression for $G$ and $g_n$. 
\begin{proposition}\label{prop:generatingfunction}
	The rational generating function for the numbers of order-consecutive partition sequences $\textnormal{OCPS}^{n+1}(p+1)$ for $n, p\in \NN$ is 
	\begin{align}\label{equ:generatingfunction-OCPS}
		G(x, y) = \frac{1-x}{1 - 2x - 2xy + x^2 + x^2y}\, .
	\end{align}
\end{proposition}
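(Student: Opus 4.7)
The plan is to derive a bivariate recurrence for $\textnormal{OCPS}^{(n)}(p)$ combinatorially, translate it into a closed functional equation for $G(x,y)$, and solve the resulting linear equation. The first move is a bijective reformulation: an order-consecutive partition sequence $(s_1,\dots,s_p)$ of $[n]$ is equivalent to a strictly increasing chain of integer intervals $[a_1,b_1]\subsetneq [a_2,b_2]\subsetneq\cdots\subsetneq [a_p,b_p]=[1,n]$, recovered from the chain via $s_1=[a_1,b_1]$ and $s_k=[a_k,b_k]\setminus [a_{k-1},b_{k-1}]$ for $k\geq 2$. Hence $\textnormal{OCPS}^{(n)}(p)$ is the number of such chains, and the order-consecutive condition is rephrased as ``each prefix is an integer interval.''

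Classifying such a chain by its penultimate interval $[a,b]$ gives a recurrence. Since $[a,b]$ must be a proper subinterval of $[1,n]$, its length $k$ ranges over $1\leq k\leq n-1$; for each length there are $n-k+1$ choices of position, and (by translation invariance) the induced chain of length $p-1$ ending at $[a,b]$ is counted by $\textnormal{OCPS}^{(k)}(p-1)$. This produces
\begin{align*}
\textnormal{OCPS}^{(n)}(p)=\sum_{k=1}^{n-1}(n-k+1)\,\textnormal{OCPS}^{(k)}(p-1),\qquad p\geq 2,
\end{align*}
together with the base data $\textnormal{OCPS}^{(n)}(1)=1$ for $n\geq 1$ and $\textnormal{OCPS}^{(1)}(p)=0$ for $p\geq 2$.

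To convert this into a functional equation I would set $a_{n,p}:=\textnormal{OCPS}^{(n+1)}(p+1)$, multiply the recurrence by $x^n y^p$ and sum over the bulk $n\geq 1,\,p\geq 1$, and swap the summation order. The elementary identity $\sum_{m\geq 1}(m+1)x^m=x(2-x)/(1-x)^2$, combined with the boundary contribution $\sum_{n\geq 0}a_{n,0}x^n=1/(1-x)$ (and $a_{0,p}=0$ for $p\geq 1$), should collapse the double sum to
\begin{align*}
G(x,y)=\frac{1}{1-x}+\frac{xy(2-x)}{(1-x)^2}\,G(x,y),
\end{align*}
which is linear in $G$ and solves to
\begin{align*}
G(x,y)=\frac{1-x}{(1-x)^2-xy(2-x)}=\frac{1-x}{1-2x-2xy+x^2+x^2y},
\end{align*}
as claimed.

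The main obstacle I anticipate is the bookkeeping in the generating-function step rather than the combinatorics: the recurrence is valid only on $n,p\geq 1$, so the $p=0$ row and the $n=0$ column have to be peeled off cleanly, and the double index shift $j=k-1$, $m=n-j$ inside the inner sum must preserve every term without double counting. A fallback route, should that bookkeeping become unwieldy, is to substitute the Hwang--Mallows closed form of Theorem~\ref{thm:OCPS-counting} directly into $G(x,y)$, evaluate the resulting sums $\sum_{n\geq 0}\binom{n+2k}{2k}x^n=(1-x)^{-2k-1}$ and $\sum_{p\geq k}(-1)^{p-k}\binom{p}{k}y^p=y^k(1+y)^{-k-1}$, and recognize the remaining $k$-sum as a geometric series in $y/((1-x)^2(1+y))$, which telescopes to the same rational function.
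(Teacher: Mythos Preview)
Your proposal is correct and takes a genuinely different route from the paper. The paper establishes the recurrence
\[
n\,\textnormal{OCPS}^{(n+1)}(p+1)=2p\,\textnormal{OCPS}^{(n)}(p)+(n+2p)\,\textnormal{OCPS}^{(n)}(p+1)
\]
by directly manipulating the Hwang--Mallows closed formula (Theorem~\ref{thm:OCPS-counting}); the multiplicative factors $n$ and $p$ then force a first-order linear PDE in $x$ and $y$, and the proof concludes by checking that the claimed rational function solves that PDE with the right initial data. Your argument instead produces a convolution-type recurrence $\textnormal{OCPS}^{(n)}(p)=\sum_{k=1}^{n-1}(n-k+1)\,\textnormal{OCPS}^{(k)}(p-1)$ from the interval-chain bijection, which is proved purely combinatorially and does not invoke the closed formula at all; the upshot is an \emph{algebraic} equation for $G$ rather than a differential one, solved in one line. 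Your approach is thus more self-contained (it does not lean on Theorem~\ref{thm:OCPS-counting}) and avoids the uniqueness discussion for the PDE, at the cost of the extra bijective lemma. The bookkeeping you flag as a potential obstacle works out exactly as you sketch; the index shift $m=n-j$ cleanly factors the double sum into $\tfrac{x(2-x)}{(1-x)^2}\cdot y\,G(x,y)$, and the boundary rows contribute precisely $1/(1-x)$. Your fallback route via the Hwang--Mallows formula and the geometric series in $y/((1-x)^2(1+y))$ is a third, equally valid derivation.
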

\noindent This generating function expands into
\begin{align}
	G(x, y) \overset{x\to 0}{=} 1+x\left(1 + 2 y\right)+x^2\left(1 + 5y + 4 y^2\right) + \Oo\mleft(x^3\mright)\, .
\end{align}
\begin{remark}
	This generating function $G$ was already conjectured in~\cite{Babusci17} (in Section~9 on page~26). There, the numbers of order-consecutive partition sequences were rediscovered in the context of a combinatorial approach to lacunary series of generalized Laguerre polynomials.
\end{remark}
Proposition~\ref{prop:generatingfunction} can be shown in two steps by first finding a recurrence relation and then deducing the generating function from there. Therefore, the following lemma is given.
\begin{lemma}
	The sequence $\left(\textnormal{OCPS}^{n+1}(p+1)\right)_{n, p\in\NN^\times}$ of numbers of order-consecutive partition sequences is determined by the recursion 
	\begin{align}
		(n)\textnormal{OCPS}^{(n+1)}(p+1) = (2p)\textnormal{OCPS}^{(n)}(p) + (n+2p)\textnormal{OCPS}^{(n)}(p+1)\, ,
	\end{align}
\end{lemma}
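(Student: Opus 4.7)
The plan is to verify the recurrence algebraically using the closed formula for order-consecutive partition sequences from Theorem~\ref{thm:OCPS-counting}. Writing
\[
\textnormal{OCPS}^{(n+1)}(p+1) = \sum_{k=0}^{p}(-1)^{p-k}\binom{p}{k}\binom{n+2k}{2k},
\]
the recurrence becomes an explicit identity among sums involving $\binom{p}{k}$, $\binom{p-1}{k}$, and binomials of the forms $\binom{n+2k}{2k}$ and $\binom{n-1+2k}{2k}$; it therefore suffices to align both sides on a common binomial basis and compare coefficients.

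The first step is to establish the auxiliary identity
\[
n\binom{n+2k}{2k} = (n+2k)\binom{n-1+2k}{2k},
\]
which follows by applying Pascal's rule $\binom{n+2k}{2k} = \binom{n-1+2k}{2k}+\binom{n-1+2k}{2k-1}$ and rewriting $n\binom{n-1+2k}{2k-1} = 2k\binom{n-1+2k}{2k}$. Substituting into the left-hand side of the recurrence yields
\[
n\cdot\textnormal{OCPS}^{(n+1)}(p+1) = \sum_{k=0}^{p}(-1)^{p-k}\binom{p}{k}(n+2k)\binom{n-1+2k}{2k}.
\]
This already matches the right-hand side in its dependence on $\binom{n-1+2k}{2k}$, which is exactly the binomial appearing in $\textnormal{OCPS}^{(n)}(\cdot)$.

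Next, I would split $n+2k = (n+2p) - 2(p-k)$. The $(n+2p)$-portion reproduces $(n+2p)\cdot\textnormal{OCPS}^{(n)}(p+1)$ on the right-hand side, while the remaining $-2(p-k)$-portion kills the $k=p$ term and leaves a sum over $k=0,\dots,p-1$. The elementary identity $\binom{p}{k}(p-k) = p\binom{p-1}{k}$ then rewrites this residual as $2p\sum_{k=0}^{p-1}(-1)^{p-1-k}\binom{p-1}{k}\binom{n-1+2k}{2k} = 2p\cdot\textnormal{OCPS}^{(n)}(p)$, completing the proof.

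Conceptually the argument is straightforward once the identity $n\binom{n+2k}{2k} = (n+2k)\binom{n-1+2k}{2k}$ is discovered, so the main obstacle is purely bookkeeping — tracking signs, shifted indices, and upper summation bounds across the manipulation. A combinatorial proof using the characterization of order-consecutive partition sequences as strictly nested chains of intervals $[a_1,b_1]\subsetneq\dots\subsetneq[a_{p+1},b_{p+1}]=[1,n+1]$ is plausible and would give direct meaning to the weights $n$, $2p$, and $n+2p$, but the algebraic route is more economical given the explicit formula already at hand.
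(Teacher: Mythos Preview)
Your proof is correct and is essentially the same as the paper's: both arguments rest on the two identities $n\binom{n+2k}{2k}=(n+2k)\binom{n-1+2k}{2k}$ and $(p-k)\binom{p}{k}=p\binom{p-1}{k}$, together with the splitting $n+2k=(n+2p)-2(p-k)$. The only difference is organizational---the paper writes the difference $\textnormal{OCPS}^{(n+1)}(p+1)-\tfrac{2p}{n}\textnormal{OCPS}^{(n)}(p)-\tfrac{n+2p}{n}\textnormal{OCPS}^{(n)}(p+1)$ and collapses it to zero termwise, whereas you transform the left-hand side directly into the right-hand side; your presentation is arguably a bit cleaner.
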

\begin{proof}
	This can be seen by explicit calculation
	\begin{align}
		&\textnormal{OCPS}^{(n+1)}(p+1) - \left(\frac{2p}{n}\right) \textnormal{OCPS}^{(n)}(p) - \left(\frac{n+2p}{n}\right)\textnormal{OCPS}^{(n)}(p+1)\nonumber\\
		&= \sum_{k=0}^{p-1} (-1)^{p-k} \binom{p}{k} \left[\binom{n+2k}{2k}-\left(\frac{n+2p}{n}-\frac{2p}{n}\frac{p-k}{p}\right)\binom{n+2k-1}{2k}\right] \nonumber\\
		&\qquad\qquad\qquad\qquad\qquad\quad\;\, +(-1)^{p-p} \binom{p}{p} \left[\binom{n+2p}{2p}-\frac{n+2p}{n}\binom{n+2p-1}{2p}\right]\nonumber\\
		&= \sum_{k=0}^{p-1} (-1)^{p-k} \binom{p}{k} \binom{n+2k}{2k} \left[1-\left(\frac{n+2k}{n}\frac{n}{n+2k}\right)\right]  +  \left[1-\frac{n+2p}{n}\frac{n}{n+2p}\right] =0\, , 
	\end{align}
	where the binomial relation $\binom{a-1}{b} = \frac{a-b}{b} \binom{a}{b}$ was employed several times. 
\end{proof}
Using this lemma Proposition~\ref{prop:generatingfunction} can be proven.
\begin{proof}[Proof of Prop.~\ref{prop:generatingfunction}]
	From the recurrence relation one can deduce a differential equation that the generating function must satisfy. It reads
	\begin{align}
		x \left[1 + (-1 + x)\partial_x + 2 y \left(1 + (1+y)\partial_y\right)\right]h(x, y)=0\, .
	\end{align}
	It can easily be verified that the function $G(x,y)$, see equation~\eqref{equ:generatingfunction-OCPS}, satisfies this differential equation. With the correct initial conditions it, therefore, generates the numbers of order-consecutive partition sequences.
\end{proof}

\begin{proposition}\label{prop:generatingfunction-g}
	For a fixed $n\in \NN$, the generating polynomial for the numbers of order-consecutive partition sequences $\textnormal{OCPS}^{n+1}(p+1)$ for $p\in \NN$ is 
	\begin{align}\label{equ:generatingfunction-g-OCPS}
		g_n(y) = \frac{(1+y)^{\frac{n-1}{2}}}{2} \left[\left(\sqrt{1+y}+\sqrt{y}\right)^{n+1} + \left(\sqrt{1+y}-\sqrt{y}\right)^{n+1}\right]\, .
	\end{align}
\end{proposition}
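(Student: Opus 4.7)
The plan is to extract $g_n(y)$ directly from the closed form of $G(x,y)$ established in Proposition~\ref{prop:generatingfunction} by treating the denominator as a quadratic polynomial in $x$ and performing partial fractions. Concretely, I would first rewrite
\begin{align*}
	1 - 2x - 2xy + x^2 + x^2 y = (1+y) x^2 - 2(1+y) x + 1,
\end{align*}
so that $G(x,y)$ is $(1-x)$ over a quadratic in $x$ with coefficients depending on $y$.

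Next I would solve that quadratic to obtain its two roots. A short computation gives
\begin{align*}
	x_\pm = 1 \pm \sqrt{\frac{y}{1+y}} = \frac{\sqrt{1+y}\pm\sqrt{y}}{\sqrt{1+y}},
\end{align*}
and one checks $x_+ x_- = 1/(1+y)$, so that $1/x_\pm = (1 \mp \sqrt{y/(1+y)})(1+y)$. Partial fractions (using that $1-x_\pm = \mp\sqrt{y/(1+y)}$, which makes the two residues equal) give
\begin{align*}
	G(x,y) = \frac{1}{2(1+y)}\left[\frac{1}{x_+ - x} + \frac{1}{x_- - x}\right].
\end{align*}

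Now I would expand each summand as a geometric series in $x$ via $\frac{1}{x_\pm - x} = \sum_{n\geq 0} x^n/x_\pm^{n+1}$. Reading off the coefficient of $x^n$, using Remark~\ref{rem:g-gn} to identify it with $g_n(y)$, yields
\begin{align*}
	g_n(y) = \frac{1}{2(1+y)}\bigl[x_+^{-(n+1)} + x_-^{-(n+1)}\bigr].
\end{align*}
Substituting the expressions for $1/x_\pm$ obtained above turns this into
\begin{align*}
	g_n(y) = \frac{(1+y)^n}{2}\bigl[(1+\alpha)^{n+1} + (1-\alpha)^{n+1}\bigr], \qquad \alpha = \sqrt{y/(1+y)},
\end{align*}
and writing $1 \pm \alpha = (\sqrt{1+y}\pm\sqrt{y})/\sqrt{1+y}$ pulls out a factor $(1+y)^{-(n+1)/2}$, leaving exactly the claimed $(1+y)^{(n-1)/2}/2$ prefactor times the symmetric sum of $(\sqrt{1+y}\pm\sqrt{y})^{n+1}$.

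There is no real obstacle here; the only thing to be careful about is that the square roots $\sqrt{y/(1+y)}$ should be treated as a formal symbol (its odd powers cancel in the symmetric combination, so the final expression is an honest polynomial in $y$, consistent with Remark~\ref{rem:g-gn}). The mildly delicate bookkeeping step is tracking the exponent of $(1+y)$ through the partial fractions and the final rewriting of $1 \pm \alpha$; the arithmetic $n - (n+1)/2 = (n-1)/2$ is what produces the stated prefactor.
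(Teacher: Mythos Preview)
Your argument is correct and complete. The partial-fraction step, the geometric expansion, and the final exponent bookkeeping all check out; in particular your observation that the two residues both equal $\tfrac{1}{2}$ is exactly what makes the symmetric combination appear so cleanly.

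It is worth noting that the paper takes a genuinely different route. Rather than factoring the quadratic directly, it rewrites the denominator in the form $1 - 2t\,\xi + \xi^2$ with $t=\sqrt{1+y}$ and $\xi = x\sqrt{1+y}$, recognizes this as the generating function of the Chebyshev polynomials of the second kind $U_n(t)$, extracts the coefficient of $x^n$ as $t^{n}U_n(t) - t^{n-1}U_{n-1}(t)$, and then uses trigonometric identities (with $t=\cos\phi$) to collapse this to $t^{n-1}T_{n+1}(t)$; the closed form for the Chebyshev polynomial of the first kind $T_{n+1}$ gives the stated expression. Your approach is more elementary and self-contained, avoiding any special-function machinery, while the paper's route has the advantage of identifying $g_n$ with a classical object whose zeros are already well understood --- which, incidentally, would give an alternative path to the real-rootedness needed in Theorem~\ref{thm:logconcavity-OCPS}. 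Your remark about treating $\sqrt{y/(1+y)}$ formally is also apt: the paper handles the analogous issue by separately invoking the hyperbolic representation of the Chebyshev polynomials for $|t|\geq 1$.
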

\begin{remark}
	It is emphasized here, that $g_n(y)$ is in fact a polynomial for each $n\in \NN$ (see Remark~\ref{rem:g-gn}).
\end{remark}
The proof of this proposition is shifted to Appendix~\ref{app:proof-gn} since it turns out to be rather technical.

\subsubsection{Proof of logarithmic concavity of OCPS}\label{sec:log-concavity-OCPS}
The combinatorial Theorem \ref{thm:logconcavity-OCPS}, below, states that the numbers of order-consecutive partition sequences form log-concave sequences. Thereby, it establishes that the $f^*$-vector of the Ehrhart polynomial associated to $\psi$-class intersection numbers of powers one is log-concave.
\begin{theorem}\label{thm:logconcavity-OCPS}
	For every $n\in\NN^\times$ the sequence $(\textnormal{OCPS}^{(n)}(p))_{p=1, \dots, n}$ is log-concave.
\end{theorem}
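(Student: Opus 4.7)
The plan is to show that the generating polynomial $g_n(y)$ of Proposition~\ref{prop:generatingfunction-g} is real-rooted; by the criterion invoked in the excerpt, this implies log-concavity of its coefficient sequence $\bigl(\textnormal{OCPS}^{(n+1)}(p+1)\bigr)_{p=0,\dots,n}$, and re-indexing $n \mapsto n-1$ then delivers Theorem~\ref{thm:logconcavity-OCPS}. The positivity hypothesis of that criterion is automatic, since the coefficients count combinatorial objects.

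The essential observation is that the closed form~\eqref{equ:generatingfunction-g-OCPS} is secretly a Chebyshev polynomial. Using the identity $T_m(x) = \tfrac{1}{2}\bigl((x+\sqrt{x^2-1})^m + (x-\sqrt{x^2-1})^m\bigr)$ with $x = \sqrt{1+y}$ and $\sqrt{x^2-1} = \sqrt{y}$, formula~\eqref{equ:generatingfunction-g-OCPS} rewrites as
\begin{align*}
	g_n(y) = (1+y)^{(n-1)/2}\, T_{n+1}\mleft(\sqrt{1+y}\mright),
\end{align*}
where $T_{n+1}$ is the Chebyshev polynomial of the first kind. Although $(1+y)^{(n-1)/2}$ is not a polynomial when $n$ is even, the parity of $T_{n+1}$ (matching that of $n+1$) ensures that the half-integer powers of $1+y$ pair up into integer powers, so $g_n$ is genuinely a polynomial in $y$ of degree $n$. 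A short parity case analysis then produces a factorization $g_n(y) = (1+y)^{\lfloor n/2 \rfloor}\, Q_n(y)$, where $Q_n$ has degree $\lceil n/2 \rceil$ and its roots are precisely the $\lceil n/2 \rceil$ distinct non-trivial values $y_k = -\sin^2\bigl((2k+1)\pi/(2(n+1))\bigr) \in (-1, 0)$ inherited from the zeros $\cos\bigl((2k+1)\pi/(2(n+1))\bigr)$ of $T_{n+1}$.

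All $n$ roots of $g_n$, counted with multiplicity, are therefore real and contained in $[-1, 0]$; together with the non-negativity of the coefficients, this yields the claimed log-concavity via the classical criterion. The principal technical hurdle is the parity bookkeeping around $y = -1$: for $n$ even one must carefully combine the half-integer contribution from $(1+y)^{(n-1)/2}$ with the odd factor $\sqrt{1+y}$ implicit in $T_{n+1}(\sqrt{1+y})$ to recover the correct multiplicity $n/2$, while for $n$ odd the multiplicity reads off immediately as $(n-1)/2$. Beyond this, the argument reduces to standard Chebyshev identities and a single application of the real-rooted-implies-log-concave criterion already invoked in the paper.
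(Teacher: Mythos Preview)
Your argument is correct and follows the same overall strategy as the paper: establish that $g_n$ is real-rooted and invoke the real-rooted $\Rightarrow$ log-concave criterion. The difference is one of packaging. The paper, having derived the identity $g_n(y) = (1+y)^{(n-1)/2}\,T_{n+1}(\sqrt{1+y})$ in Appendix~\ref{app:proof-gn}, does \emph{not} then use the known Chebyshev zeros; instead it returns to the raw form~\eqref{equ:generatingfunction-g-OCPS}, sets the bracket to zero, and solves explicitly (Appendix~\ref{app:zero-set}) to obtain the set $\Yy_0$ of~\eqref{equ:zeroset-y0}. You shortcut this by reading off the zeros of $T_{n+1}$ directly and pushing them through $x\mapsto x^2-1$, which is cleaner and avoids the auxiliary computation. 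Your parity bookkeeping at $y=-1$ also matches the paper's count $\lceil(n-1)/2\rceil = \lfloor n/2\rfloor$.

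One incidental remark: your zero locations $y_k = -\sin^2\bigl((2k+1)\pi/(2(n+1))\bigr)\in(-1,0)$ are the correct ones. The paper's expression $y_k = \tan^2(\cdot)/(1+\tan^2(\cdot)) = \sin^2(\cdot)$ in~\eqref{equ:zeroset-y0} carries a sign slip (already visible for $n=1$, where $g_1(y)=1+2y$ vanishes at $-1/2$, not $+1/2$). This does not affect the paper's conclusion, since only reality of the roots is used, but your formula is the accurate one.
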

\begin{proof}
	In this proof, the results of the previous subsection are used. According to Proposition~\ref{prop:generatingfunction-g} the generating function
	\begin{align}
		g_n(y) = \frac{(1+y)^{\frac{n-1}{2}}}{2} \left[\left(\sqrt{1+y}+\sqrt{y}\right)^{n+1} + \left(\sqrt{1+y}-\sqrt{y}\right)^{n+1}\right]
	\end{align}
	generates for a fixed $n\in\NN$ the $\textnormal{OCPS}^{(n+1)}(p+1)$. Thus, to obtain the log-concavity for the sequence of $\textnormal{OCPS}^{(n+1)}(p+1)$ for some fixed $n$, one needs to show for all $n\in \NN$ that $g_n(y)$ has only \textit{real} zeros. In order to do this, note that due to the first factor of its closed expression in the equation above $g_n$ has a zero of order $\ceil{(n-1)/2}$ at $y=-1$. The remaining $\ceil{n/2}$ zeros are determined by 
	\begin{align}\label{equ:zeros-gn}
		\left(\sqrt{1+y}+\sqrt{y}\right)^{n+1} + \left(\sqrt{1+y}-\sqrt{y}\right)^{n+1} = 0\, .
	\end{align}
	This is solved by $y_k\in\Yy_0$, such that 
	\begin{align}\label{equ:zeroset-y0}
		\Yy_0 \coloneqq \left\{ \frac{\tan[2](\frac{\pi}{2}\frac{2k+1}{n+1})}{1 + \tan[2](\frac{\pi}{2}\frac{2k+1}{n+1})} \right\}_{k\in\ZZ}\, .
	\end{align}
	The calculation leading to this presentation of $\Yy_0$ can be found in Appendix~\ref{app:zero-set}. $\Yy_0$ contains $\ceil{n/2}$ different points, due to the symmetry $k\mapsto -(k+1)$. Adding up the numbers of zeros and taking their order into account, one finds $\ceil{n/2}+\ceil{(n-1)/2}=n$. This verifies that all zeros of $g_n$ are found, since $g_n(y)$ is a polynomial of degree $n$ (see Remark~\ref{rem:g-gn}). This concludes the proof since 
	\begin{align}
		\{y\in\CC \colon g_n(y)=0\} = \{-1\} \cup \Yy_0 \subset \RR\, .
	\end{align}
\end{proof}

\subsection{$h^*$ expansion}\label{sec:hstar}
Next to the $f^*$-coefficients of Ehrhart polynomials the $h^*$-coefficients are actually the more classical object to investigate. In this section these coefficients are computed for $\Ll_n(g)$. 

\begin{theorem}\label{thm:h*}
	Let $2\leq n\in\NN$. The normalized Ehrhart polynomial $\Ll_n(g)$, which computes intersection numbers of $\psi$-classes with powers $\vec{d}=(1, \dots, 1)\in\NN^n$, enjoys the $h^*$-expansion
	\begin{align}
		\Ll_n(g)= \sum_{i=0}^n h^*_i \; \binom{g+n-i}{n}\, && \text{with} && h^*_i = \binom{n+1}{2(i-1)}\, .
	\end{align}
\end{theorem}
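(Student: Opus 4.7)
My plan is to read off the $h^*$-vector directly from the generating function identity~\eqref{equ:f*-h*-genfunc}, avoiding the route through the $f^*$-expansion of Theorem~\ref{thm:f*-OCPS}. Specifically, it suffices to show that
\begin{align*}
    \sum_{k\geq 0} \binom{2k-2+n}{n}\, z^k \;=\; \frac{\sum_{i=0}^n \binom{n+1}{2(i-1)}\, z^i}{(1-z)^{n+1}}\, ,
\end{align*}
since the numerator on the right is a polynomial of degree~$\leq n$ and the $h^*$-vector is uniquely determined by such an identity.

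The first step is to compute the left-hand side in closed form. The key observation is that, for $n\geq 2$, the coefficient $\binom{m+n-2}{n}$ vanishes for $m\in\{0,1\}$, so after the index shift $m\mapsto m+2$ one obtains the clean identity
\begin{align*}
    \sum_{m\geq 0} \binom{m+n-2}{n}\, z^m \;=\; \frac{z^2}{(1-z)^{n+1}}\, .
\end{align*}
This is exactly where the hypothesis $n\geq 2$ is used. Next I would extract the even-indexed subsequence via the standard averaging trick
\begin{align*}
    \sum_{k\geq 0} \binom{2k-2+n}{n}\, z^k \;=\; \frac{1}{2}\left[ \frac{w^2}{(1-w)^{n+1}} + \frac{w^2}{(1+w)^{n+1}} \right]_{w = \sqrt{z}} \;=\; \frac{z}{2}\left[\frac{1}{(1-\sqrt{z})^{n+1}} + \frac{1}{(1+\sqrt{z})^{n+1}}\right] .
\end{align*}

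Multiplying both sides of the desired identity by $(1-z)^{n+1} = (1-\sqrt{z})^{n+1}(1+\sqrt{z})^{n+1}$ then reduces the claim to the polynomial identity
\begin{align*}
    \sum_{i=0}^n \binom{n+1}{2(i-1)}\, z^i \;=\; \frac{z}{2}\left[(1+\sqrt{z})^{n+1} + (1-\sqrt{z})^{n+1}\right] .
\end{align*}
This last equality follows immediately from the binomial theorem: the odd powers of $\sqrt{z}$ cancel, leaving $\frac{z}{2}\cdot 2\sum_{\ell\geq 0}\binom{n+1}{2\ell}z^{\ell} = \sum_{\ell\geq 0}\binom{n+1}{2\ell}z^{\ell+1}$, and reindexing $i=\ell+1$ (with the convention $\binom{n+1}{-2}=0$ accounting for the $i=0$ term) finishes the proof.

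I do not expect a genuine obstacle: the only subtlety is that one must use $n\geq 2$ to get the closed form $z^2/(1-z)^{n+1}$ at the outset, which is precisely the hypothesis stated in the theorem. The whole argument is a short generating function manipulation, noticeably shorter than passing through the Gregory--Newton conversion between the $f^*$- and $h^*$-basis via equation~\eqref{equ:h*-to-f*}.
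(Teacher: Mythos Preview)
Your argument is correct and genuinely different from the paper's. The paper converts from the $f^*$-expansion to the $h^*$-expansion via equation~\eqref{equ:h*-to-f*}: it plugs in the identification $f^*_j = \textnormal{OCPS}^{(n+1)}(j+1)$ from Theorem~\ref{thm:f*-OCPS}, then simplifies the resulting triple sum with the Chu--Vandermonde identity and a reflection formula for the $\Gamma$-function. Your route bypasses Theorem~\ref{thm:f*-OCPS} entirely: you compute $H^*(z)$ directly from the closed form $\Ll_n(g)=\binom{2g-2+n}{n}$ using the standard even-part extraction $\tfrac12[f(\sqrt{z})+f(-\sqrt{z})]$, and then read off the $h^*_i$ from the binomial expansion of $(1+\sqrt{z})^{n+1}+(1-\sqrt{z})^{n+1}$. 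This is shorter and fully self-contained, and it isolates exactly where the hypothesis $n\geq 2$ enters (the vanishing of $\binom{n-2}{n}$ and $\binom{n-1}{n}$ that yields the closed form $z^2/(1-z)^{n+1}$). The paper's longer detour does buy something expository, namely an explicit demonstration of the $f^*\!\leftrightarrow h^*$ conversion advertised in Section~\ref{sec:f*-h*-relation}, but as a proof of Theorem~\ref{thm:h*} your approach is the cleaner one. One small point worth making explicit: the uniqueness of the $h^*$-vector requires that the numerator you obtain has degree at most $n$; this follows immediately from your final binomial expansion since $\binom{n+1}{2\ell}=0$ for $2\ell>n+1$, hence $i=\ell+1\leq \lceil (n+3)/2\rceil\leq n$ for $n\geq 2$.
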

\begin{proof}
	As stated in Section~\ref{sec:f*-h*-relation} the $h^*$- and $f^*$-vector satisfy the algebraic relation~\eqref{equ:h*-to-f*}. First, note that constant term of $\Ll_n(g)$ is given by $\prod_{k=1}^{n}(k-2)$ which vanishes as long as $n\geq 2$. Then, by expanding the right-hand-side of~\eqref{equ:h*-to-f*} one finds
	\begin{align}
		\sum_{j=0}^{n} f^*_j \frac{z^{j+1}}{(1-z)^{j-n}} = \sum_{j=0}^{n} \sum_{k=0}^{n-j} (-1)^k f^*_j \binom{n-j}{k} z^{j+k+1}\, .
	\end{align}
	Due to the identification of the $f^*$-vector with the number of order-consecutive partition sequences in Theorem~\ref{thm:f*-OCPS}, the $h^*$-vector of $\Ll_n(g)$ is given by the coefficients of 
	\begin{align}
		H^*(z) = \sum_{i=0}^n h^*_{i} z^i = \sum_{j=0}^{n} \sum_{i=0}^{n-j} \sum_{l=0}^j (-1)^{j-l+k} \binom{n+2l}{2l} \binom{j}{l} \binom{n-j}{k} z^{j+k+1}\, .
	\end{align}
	Reordering of the sums yields 
	\begin{align}
		h^*_{i+1} &= \sum_{l=0}^{i} \sum_{j=l}^{i} (-1)^{i-l} \binom{n+2l}{2l} \binom{j}{l} \binom{n-j}{i-j} \nonumber\\
		&= \sum_{l=0}^i (-1)^{i-l} \binom{n+2l}{2l}\sum_{j=l}^{i} \binom{l+j}{l} \binom{n-l-j}{i-l-j} \, .
	\end{align}
	The inner sum can be treated by the Chu-Vandermonde relation in the transformed form $\sum_{K=0}^{N}\binom{X+K}{K}\binom{Y+N-K}{N-K} = \binom{X+Y+N+1}{N}$, with $N=k-l$, $X=l$ and $Y=d-k$. It can be deduced from the more-common version by the relation $(-1)^v \binom{u}{v} = \binom{-u+v-1}{v}$. Thus, 
	\begin{align}
		h^*_{i+1} = \sum_{l=0}^i (-1)^{i-l} \binom{n+2l}{2l} \binom{n+1}{i-l} \, .
	\end{align}
	Rewriting in terms of the $\Gamma$-function, one has 
	\begin{align}
		h^*_{i+1} = \frac{\Gamma(-n-1+2i)}{\Gamma(-n-1)\Gamma(2i+1)}\, .
	\end{align}
	This computes the desired result which can be seen using the reflection identity of the $\Gamma$-function $\frac{\Gamma(-u+v+1)}{\Gamma(-u)}= (-1)^{v+1} \frac{\Gamma(u+1)}{\Gamma(u-v)}$, with $u=n+1$ and $v=2i-1$.
\end{proof}
At this point a remark on the restriction of the theorem above to $n\geq2$ is appropriate. As mentioned in the proof, this is due to the fact that as long as $n\geq2$, the Ehrhart polynomial $\Ll_n(g)$ has no constant term. For the two exceptional cases 
\begin{align}
	\Ll_0(g) = 1\, , && \text{and} && &\Ll_1(g) = 2g-1\, ,
\intertext{one calculates by hand}
	n=0: \quad h^*_0 = 1\, , && \text{and} && &n=1: \quad h^*_0 = -1\, , \; h^*_1 = 3\, .
\end{align}
The distinction of $n=0, 1$ here can be interpreted using the algebro-geometric origin of the Ehrhart polynomials. Recalling that $L_n$ computes $\psi$-class intersection numbers on $\Mgnbar[g, n+1]$, the Ehrhart polynomials indexed by $n=0, 1$ touch the realm of unstable topologies of positive Euler characteristic $\chi_{g, n}$. Here intersection numbers are classically not well-defined. \\
Furthermore, it is notable that starting from $n=2$ the $h^*$-vector is non-negative and $h^*_0$ always vanishes. Parallel to the $f^*$-vector, the $h^*$-vector has nice properties, which can directly be read off from the representation in Theorem~\ref{thm:h*} such as the fact that the sequences $\{(h^*_i)_{i=0, \dots, n}\}_{n\in\NN}$ have no internal zeros and are log-concave.\\

To summarize, for the first few $\Ll_n$ the $h^*_i$ are given by 
\begin{align*}
	\begin{tabular}{c|cccccc}
		\diagbox
		{$n$}{$i$} &0&1 &2 &3  &4  &5 \\
		\hline
		1&1&  &  &   &   & \\
		2&$-1$&3 &  &   &   & \\
		3&0&1 &3 &   &   & \\
		4&0&1 &6&1  &   & \\
		5&0&1&10&5 &0 & \\
		6&\;\;0\;\;&\;\;1\;\;&\;\;15\;\;&\;\;15\;\;&\;\;1\;\;&\;\;0 
	\end{tabular}
\end{align*}

\section{General Conjecture}\label{sec:general-conjecture}
In this paper we have analyzed the subsector of $\psi$-class intersection numbers of the form 
\begin{align}
	\int_{\Mgnbar[g, n+1]} \frac{\psi_1^{d_1} \cdots \psi_n^{d_n}}{1-\psi_{n+1}} && \text{with} && \vec{d} = (1, \dots, 1)\, .
\end{align}
We conjecture, however, that logarithmic concavity of the $f^*$- and $h^*$-vector generalizes to all $\psi$-class intersection numbers.
\begin{mainconjecture}
	The Ehrhart polynomial $L_{C}$ of a partial polytopal complex $C$ associated to $\psi$-class intersection numbers specified by $d = (d_1, \dots, d_n)\in\NN^n$ for $n\in\NN$, given by 
	\begin{align}
		L_{C}(g) = (g+m)!24^{g+m} \prod_{i=1}^{n}(2d_i+1)!! \int_{\Mgnbar[g, n+1]} \frac{\psi_1^{d_1} \cdots \psi_n^{d_n}}{1-\psi_{n+1}}\, ,
	\end{align}
	enjoys logarithmically concave $f^*$- and $h^*$-expansions.
\end{mainconjecture}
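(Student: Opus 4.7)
The plan is to reduce the conjecture to showing that the univariate generating polynomials
\begin{align*}
    \mathcal{F}^*_{\vec{d}}(z) = \sum_{i} f^*_i\, z^i, \qquad \mathcal{H}^*_{\vec{d}}(z) = \sum_{i} h^*_i\, z^i
\end{align*}
are real-rooted, since a polynomial with non-negative coefficients and only real zeros automatically has log-concave coefficients. Non-negativity of the $f^*$-vector is already established in~\cite{afandi2022ehrhart}, and non-negativity of $h^*_i$ -- evident in the table of Section~\ref{sec:hstar} and in the closed form of Theorem~\ref{thm:h*} -- would need to be tracked along the way. The base case $\vec{d}=(1,\dots,1)$ is exactly what Section~\ref{sec:log-concavity-OCPS} established for $f^*$, and a parallel argument handles $h^*$ via the explicit expression $h^*_i = \binom{n+1}{2(i-1)}$.

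I would then set up a double induction on $n$ and on $|\vec{d}| \coloneqq \sum_i d_i$. For the step $\vec{d} \mapsto (\vec{d}, 1)$ the dilaton equation~\eqref{equ:dilaton-equation} yields the clean multiplicative recursion $L_{(\vec{d},1)}(g) = 3\bigl(2(g-1)+n+1\bigr)\, L_{\vec{d}}(g)$; passing to the binomial basis, this translates into a first-order linear differential operator acting on $\mathcal{F}^*_{\vec{d}}(z)$ of the schematic form
\begin{align*}
    \mathcal{F}^*_{(\vec{d},1)}(z) = A(z)\, \mathcal{F}^*_{\vec{d}}(z) + B(z)\, \partial_z \mathcal{F}^*_{\vec{d}}(z)
\end{align*}
for explicit polynomial coefficients $A, B$ with non-negative coefficients. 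Real-rootedness preservation for operators of this shape is a classical theme (Hermite--Biehler, P\'olya--Schur, or direct Rolle/interlacing arguments), and one would verify the corresponding hypotheses here directly; the matching statement for $\mathcal{H}^*$ then follows via the $f^* \leftrightarrow h^*$ conversion of Section~\ref{sec:f*-h*-relation}. This alone handles every $\vec{d}$ obtained by successively appending $1$'s to the base tuple $(1,\dots,1)$.

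For the genuine increments $d_i \mapsto d_i+1$ one is forced to use the full Virasoro constraints~\cite{Kontsevich:1992ti}, which no longer act as a single linear factor and which mix different $(g,n)$. I would pursue two complementary routes: (a) a multivariate approach in the spirit of Br\"and\'en--Huh, assembling the normalized family $\{L_{\vec{d}}(g)\}_{\vec{d}}$ into a multivariate generating function and attempting to show that it is Lorentzian, which would imply log-concavity of every coefficient slice simultaneously; and (b) a geometric route via a sufficiently structured unimodular triangulation of Afandi's partial polytopal complex $\Pp_{\vec{d}}$, where Theorem~\ref{thm:fi-counting} turns $f^*_i$ into a face count and log-concavity becomes an Alexandrov--Fenchel-type injection statement on simplices.

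The hardest step, and the principal obstacle, is precisely this $d_i \mapsto d_i+1$ move: the Virasoro recursion shuffles marked points and mixes $(g,n)$, so the induced transformation on $\mathcal{F}^*_{\vec{d}}$ is not manifestly real-root- or Lorentzian-preserving. As an intermediate sanity check that should already yield strong evidence and potentially a template for the general inductive step, I would verify the conjecture explicitly for the families $\vec{d} = (d, 1, \dots, 1)$ and $\vec{d} = (d_1, d_2)$: in both, iterating the dilaton equation together with a small number of low-genus Virasoro relations should produce closed forms for $L_{\vec{d}}$ amenable to the real-rootedness techniques of Section~\ref{sec:log-concavity}.
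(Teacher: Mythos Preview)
The statement you are attempting to prove is labelled \emph{Conjecture} in the paper and is left open; the paper offers no proof, only numerical evidence in Appendix~\ref{app:general-conjecture}. So there is no argument in the paper to compare your proposal against, and any complete proof would be a genuinely new result.

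Your proposal is not a proof but a research outline, and you say so yourself: the $d_i \mapsto d_i+1$ step is identified as ``the principal obstacle'' and you only sketch possible avenues (Lorentzian polynomials, a hypothetical unimodular triangulation) without carrying any of them through. That is the essential gap. There is also a factual error that would bite early: you write that non-negativity of $h^*_i$ is ``evident in the table of Section~\ref{sec:hstar}'', but that table only covers $\vec{d}=(1,\dots,1)$, and for the general $\vec{d}$ listed in Appendix~\ref{app:general-conjecture} the $h^*$-vectors have strictly alternating signs (e.g.\ $h^*=(810,-3240,4860,-3240,810)$ for $\vec{d}=(2,1,1)$). This does not by itself kill a real-rootedness strategy---Newton's inequalities for real-rooted polynomials give $a_i^2 \ge a_{i-1}a_{i+1}$ without any positivity assumption---but it does mean the ``non-negativity would need to be tracked along the way'' remark is misguided, and any argument relying on positivity of $h^*$ will fail immediately outside the $\vec{d}=(1,\dots,1)$ case.
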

In Appendix~\ref{app:general-conjecture} we provide data supporting this statement for $\vec{d}$ beyond the case $\vec{d}=(1, \dots, 1)$.

\section{Conclusion}
The moduli space of curves has been a source of inspiration for numerous projects in the past, resulting in significant advancements in its theory and beyond. In this work the Ehrhart polynomial associated to intersection numbers on the moduli space of curves of $\psi$-classes with unit power are investigated, building on \cite{afandi2022ehrhart} by Afandi. \\
There it was shown that tautological intersection numbers on the moduli space of curves are computed by Ehrhart polynomials of partial polytopal complexes. The present work initiates studies to understand which geometries in the vast class of partial polytopal complexes actually correspond to the algebro-geometric problem of tautological intersection numbers. \\\\
Therefore, the $f^*$- and $h^*$-expansion of the Ehrhart polynomials for the aforementioned subclass of intersection numbers is calculated, and combinatorial interpretations are found. The $f^*$-vector, in particular, counts order-consecutive partition sequences, while the $h^*$-vector computes binomial coefficients. Being related to an exceptionally nice problem on the moduli space of curves, it should not be surprising that these numbers satisfy meaningful properties. It is shown that the $f^*$- and $h^*$-vector investigated here are logarithmically concave, which is a deep property inherent to many important sequences across mathematics. \\\\
This further characterizes the locus of polytopal objects corresponding tautological intersection numbers on the moduli space of curves. Therefore, it constitutes a step towards explicitly constructing them from the algebro-geometric data. Furthermore, this work has shown, that already on the way interesting objects can be rediscovered and analyzed.

\begin{appendices}

\section{Proof of Proposition 4.4}\label{app:proof-gn}
Here the closed form expression for the generating polynomial of $\text{OCPS}^{(n+1)}(p+1)$ for fixed $n$ is proved. This can be done using elementary methods via a tedious induction in $n$. The proof presented here, however, uses relations of special functions named after Chebyshev. The author wants to thank R. Wulkenhaar for pointing out this elegant approach.
\begin{proof}[Proof of Proposition~\ref{prop:generatingfunction-g}]
	The generating polynomial $g_n(y)$ is the coefficient of $x^n$ in the series $G(x, y)$. In order to extract this coefficient rewrite 
	\begin{align}
		G(x, y) = \frac{1-x}{1-2x (1+y)+x^2(1+y)} =\frac{1-x}{1-2\sqrt{1+y} \left(x \sqrt{1+y}\right)+\left(x\sqrt{1+y}\right)^2}\, .
	\end{align}
	This can be expanded into 
	\begin{align}
		G(x, y) = (1-x)\sum_{n=0}^\infty U_n\mleft(\sqrt{1+y}\mright) (1+y)^{n/2}x^n\, ,
	\end{align}
	where $U_n(t) = C^{(1)}_n(t)$ are Chebyshev polynomials of the second kind, a special case of Gegenbauer polynomials $C^{\lambda}_n$ \cite{Stein71}. These polynomials have explicit representations, which will be employed here. For $\abs{t}\leq 1$, let $t = \cos \phi$, then
	\begin{align}
		U_n(\cos\phi) = \frac{\sin((n+1)\phi)}{\sin\phi}\, .
	\end{align} 
	Using this as well as trigonometric relations, one calculates 
	\begin{align}
		[x^n]   G(x,t^2-1) &= t^n U_n(t)  - t^{n-1} U_{n-1}(t)\nonumber\\
		&= \frac{\sin((n+1)\phi)}{\sin \phi} \cos^n \phi- \frac{\sin(n\phi)}{\sin \phi} \cos^{n-1} \phi \nonumber\\
		&= \left[ \frac{\sin(n\phi)}{\sin\phi} \cos^2\phi + \cos\phi\cos(n\phi) - \frac{\sin(n\phi)}{\sin\phi} \right] \cos^{n-1} \phi\nonumber\\
		&= \left[ \cos\phi \cos(n\phi) - \sin\phi\sin(n\phi) \right] \cos^{n-1} \phi\nonumber\\
		&= \cos((n+1)\phi) \cos^{n-1} \phi = t^{n-1} T_{n+1}(t)\, , 
	\end{align}
	where $T_n$ are the Chebyshev polynomials of first kind \cite{tchebychev1853}. These, in turn, can be written as 
	\begin{align}
		T_n(t)= \frac{1}{2} \left[\left(t+\sqrt{t^2-1}\right)^n +\left(t-\sqrt{t^2-1}\right)^n\right]\, .
	\end{align}
	Passing to $t^2-1=y$ one finds 
	\begin{align}\label{equ:app-gn}
		g_n(y) = \frac{(1+y)^{\frac{n-1}{2}}}{2} \left[\left(\sqrt{1+y}+\sqrt{y}\right)^{n+1} + \left(\sqrt{1+y}-\sqrt{y}\right)^{n+1}\right]\, .
	\end{align}
	Note that for $\abs{t}\geq1$, there are equivalent relations of the Chebyshev polynomials to hyperbolic trigonometric functions giving the same result for $g_n$ as equation~\eqref{equ:app-gn}. This concludes the proof.
\end{proof}

\section{Solution of equation~\eqref{equ:zeros-gn} in Section~\ref{sec:log-concavity-OCPS}
}\label{app:zero-set}
In the proof of Theorem~\ref{thm:logconcavity-OCPS} it is claimed that equation~\eqref{equ:zeros-gn}, that is
\begin{align}\label{equ:app-zeros-gn}
	\left(\sqrt{1+y}+\sqrt{y}\right)^{n+1} = - \left(\sqrt{1+y}-\sqrt{y}\right)^{n+1}\, ,
\end{align}
is solved by the elements of 
\begin{align}
	\Yy_0 = \left\{ \frac{\tan[2](\frac{\pi}{2}\frac{2k+1}{n+1})}{1 + \tan[2](\frac{\pi}{2}\frac{2k+1}{n+1})} \right\}_{k\in\ZZ}\, .
\end{align}
In order to see this, note that equation~\eqref{equ:app-zeros-gn} is equivalent to 
\begin{align}
	\sqrt{1+y_k}+\sqrt{y_k} = e^{\frac{\pi \mathi}{n+1} + \frac{2\pi\mathi k}{n+1}}\left(\sqrt{1+y_k}-\sqrt{y_k}\right)\, ,
\end{align}
for all $k\in \ZZ$. Rearranging this yields 
\begin{align}
	\sqrt{\frac{1+y_k}{y_k}} = \frac{\exp(\mathi \pi \frac{2k+1}{n+1})-1}{\exp(\mathi \pi \frac{2k+1}{n+1})+1} = \frac{\exp(\frac{\mathi\pi}{2} \frac{2k+1}{n+1})-\exp(- \frac{\mathi\pi}{2} \frac{2k+1}{n+1})}{\exp(\frac{\mathi\pi}{2} \frac{2k+1}{n+1})+\exp(-\frac{\mathi\pi}{2} \frac{2k+1}{n+1})} = \mathi \tan(\frac{\pi}{2} \frac{2k+1}{n+1})\, . 
\end{align}
In the last step Euler's formula $\exp[\mathi z] = \cos(z) + \mathi \sin(z)$ was employed to reduce the numerator and denominator to a sine and cosine, respectively, giving a tangent. Finally, solving for $y_k$ gives the desired result 
\begin{align}
	y_k = \frac{\left(\mathi \tan(\frac{\pi}{2} \frac{2k+1}{n+1})\right)^2}{\left(\mathi \tan(\frac{\pi}{2} \frac{2k+1}{n+1})\right)^2-1} = \frac{\tan[2](\frac{\pi}{2} \frac{2k+1}{n+1})}{\tan[2](\frac{\pi}{2} \frac{2k+1}{n+1}) + 1}\, .
\end{align}
By closely examining this expression one realizes that symmetries of the tangent translate to the identification $y_k = y_{-k-1}$. Thus, the number of different values in $\Yy_0$ reduces by half, to be precise to $\ceil{n/2}$.

\section{Data supporting general conjecture}\label{app:general-conjecture}
In Section~\ref{sec:general-conjecture} we conjecture that log-concavity of the $f^*$- and $h^*$-vector of the Ehrhart polynomial associated to intersection numbers of the form 
\begin{align}
	\int_{\Mgnbar[g, n+1]} \frac{\psi_1^{d_1} \cdots \psi_n^{d_n}}{1-\psi_{n+1}}
\end{align}
holds true for general $\vec{d}\in\NN^n$ for $n\in\NN$ beyond the case of $\vec{d} = (1, \dots, 1)$, which was treated in this work. Here, we provide numerical data supporting this conjecture presenting a small subset of data that is collected. \\
In Table~\ref{tab:fstarhstar} we list the $f^*$- and $h^*$-vectors of the Ehrhart polynomials corresponding to the intersection numbers specified by the following vectors 
\begin{align}\label{equ:d-vecs}
	\vec{d} \in \{(2, 1, 1), \; (2, 2, 1), \; (2, 2, 2), \; (5, 1, 1), \; (5, 5, 1), \; (5, 5, 5)\}\, .
\end{align}
In the logarithmic plots in Figure~\ref{fig:fstar-hstar}, one can observe the concave shape of the $f^*$- and $h^*$-vectors associated to the $\vec{d}$ listed above. 
\begin{figure}[t]
	\centering
	{\includegraphics[width=0.35\linewidth]{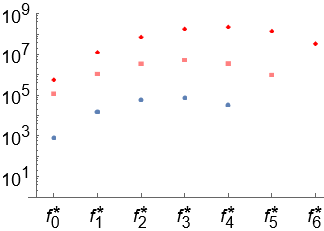}
	\hspace{1.4cm}
	\includegraphics[width=0.35\linewidth]{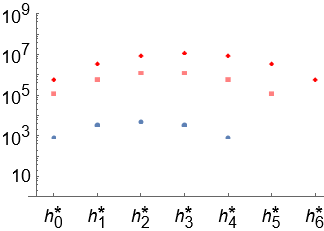}}
	{\includegraphics[width=0.35\linewidth]{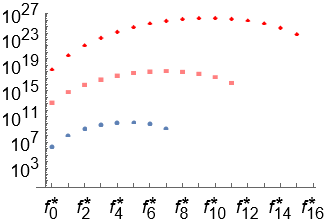}
	\hspace{1.4cm}
	\includegraphics[width=0.35\linewidth]{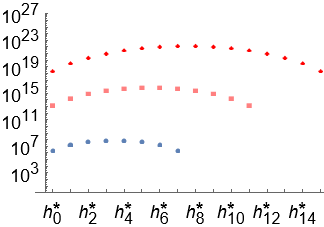}}
	\caption{This depicts the on the left the $f^*$- and on the right the absolute value of the $h^*$-coefficients associated to vectors $\vec{d}$ listed in equation~\eqref{equ:d-vecs} on a logarithmic scale. The upper panels depict the coefficients corresponding to $(2, 1, 1)$ (blue bullets), $(2, 2, 1)$ (pink squares), and $(2, 2, 2)$ (red diamonds) -- the lower panels those to $(5, 1, 1)$ (blue bullets), $(5, 5, 1)$ (pink squares), and $(5, 5, 5)$ (red diamonds)}
	\label{fig:fstar-hstar}
\end{figure}
In order to verify log-concavity numerically, that is if 
\begin{align}
	a_i^2 \geq a_{i-1}a_{i+1}\, , 
\end{align}
for $a\in\{f^*, h^*\}$ for all $i\in \NN$, define\footnote{\label{fn:gamma-ratio}The ratio $\gamma_i^{(a)}$ is well-defined as long as $a_i\neq 0$. As the sequences considered here have no internal zeros, one can therefore calculate $\gamma^{(a)}_i$ as long as $i=0, \dots, i_*$, where $i_*$ is the highest $i$ such that $a_i\neq 0$. In order to define $\gamma_i$ at the boundary of the classical domain, we set $f^*_i = h^*_i = 0$ for negative $i$ and if $i>i_*$.}
\begin{align}\label{equ:gamma-factor}
	\gamma^{(a)}_i = a_{i-1}a_{i+1}/a_i^2
	\, .
\end{align}
If the sequence $(a_i)_{i\in\NN}$ is log-concave, then $\gamma^{(a)}_i \leq 1$ for all $i\in\NN$. As presented in Table~\ref{tab:frathrat}, this is true for all vectors $\vec{d}$ considered here. 
\begin{table}[H]
	\centering
	\begin{tabular}{c||p{6.1cm}|p{6.1cm}}
		$\vec{d}$ & \centering$f^*$ & $\qquad\qquad\qquad\quad h^*$ \\
		\hline\hline
		(2, 1, 1) & $(810, 14850, 56808, 73872, 31104)$ & $(810, -3240, 4860, -3240, 810)$ \\
		\hline
		(2, 2, 1) & $(113400, 1043280, 3363768, $ \newline$4984416, 3483648, 933120)$ & $(113400, -567000, 1134000, $ \newline$-1134000, 567000, -113400)$ \\
		\hline
		(2, 2, 2) & $(567000, 12312000, 70201080, $ \newline$175718160, 220838400, 137168640, $ \newline$33592320)$ & $(567000, -3402000, 8505000, $ \newline$-11340000, 8505000, -3402000, $ \newline$567000)$ \\
		\hline
		(5, 1, 1) & $(1871100, 111068496, $ \newline$1181882988, 4917186648, $ \newline$10150059456, 11142852480, $ \newline$6248171520, 1410877440)$ & $(1871100, -13097700, $ \newline$39293100, -65488500, $ \newline$65488500, -39293100, $ \newline$13097700, -1871100)$ \\
		\hline
		(5, 5, 1) & $(12795710447520, $ \newline$545672898650880, $ \newline$7514237614403520, $ \newline$51028612339985280, $ \newline$203286770565167520, $ \newline$517792118624161920, $ \newline$880212456654612480, $ \newline$1013640585479262720, $ \newline$783018965116846080, $ \newline$389338243761438720, $ \newline$112781704966963200, $ \newline$14481697524940800)$ & $(12795710447520, $ \newline$-140752814922720, $ \newline$703764074613600, $ \newline$-2111292223840800, $ \newline$4222584447681600, $ \newline$-5911618226754240, $ \newline$5911618226754240, $ \newline$-4222584447681600, $ \newline$2111292223840800, $ \newline$-703764074613600, $ \newline$140752814922720, $ \newline$-12795710447520)$ \\
		\hline
		(5, 5, 5) & $\left(1935093730525956000,\right. $ \newline$313789090270595940000, $ \newline$ 11148733407647723025600,$ \newline$ 170915024328944674363200, $ \newline$1457161238790939517466400,$ \newline$ 7848475852338743842836000, $ \newline$28721323128140529351696000,$ \newline$ 74592064602862919259840000, $ \newline$141024211328158390374912000,$ \newline$ 196484612524240769351884800, $ \newline$201914238092811385946112000,$ \newline$ 151331816708387046014976000, $ \newline$80505819140360989468262400,$ \newline$ 28821324615457982172364800, $ \newline$6230468372491573552742400,$ \newline$\left.614848852548510547968000\right)$ & $(1935093730525956000, $ \newline$-29026405957889340000, $ \newline$203184841705225380000, $ \newline$-880467647389309980000, $ \newline$2641402942167929940000, $ \newline$-5811086472769445868000, $ \newline$9685144121282409780000, $ \newline$-12452328155934526860000, $ \newline$12452328155934526860000, $ \newline$-9685144121282409780000, $ \newline$5811086472769445868000, $ \newline$-2641402942167929940000, $ \newline$880467647389309980000, $ \newline$-203184841705225380000, $ \newline$29026405957889340000, $ \newline$-1935093730525956000)$ 
	\end{tabular}
	\captionof{table}{Here the $f^*$- and $h^*$-vectors associated to vectors $\vec{d}$ listed in equation~\eqref{equ:d-vecs} are presented.}
	\label{tab:fstarhstar}
\end{table}
\begin{table}[H]
	\centering
	\begin{tabular}{c||p{6.5cm}|p{6cm}}
		$\vec{d}$ & \centering $\gamma^{(f^*)}$ & { $\qquad\qquad\qquad\quad\gamma^{(h^*)}$} \\
		\hline\hline
		(2, 1, 1) & $(0, 0.2087, 0.0003, 7.9\cdot10^{-8}, 0)$ & $(0, 0.375, 0.4444, 0.375, 0)$ \\
		\hline
		(2, 2, 1) & $(0, 0.3505, 0.0001, 9.1\cdot10^{-9}, $ \newline$4.5\cdot10^{-13}, 0)$ & $(0., 0.4, 0.5, 0.5, 0.4, 0)$ \\
		\hline
		(2, 2, 2) & $(0., 0.2626, 3.0\cdot10^{-4}, 7.0\cdot10^{-10}, $ \newline$6.4\cdot10^{-15}, 3.7\cdot10^{-20}, 0)$ & $(0., 0.4167, 0.5333, 0.5625, 0.5333, $ \newline$0.4167, 0.)$ \\
		\hline
		(5, 1, 1) & $(0, 0.1793, 6.1\cdot10^{-7}, 2.5\cdot10^{-13}, $ \newline$2.6\cdot10^{-20}, 1.3\cdot10^{-27}, 4.5\cdot10^{-35}, 0)$ & $(0, 0.4286, 0.5556, 0.6, 0.6, 0.5556, $ \newline$0.4286, 0)$ \\
		\hline
		(5, 5, 1) & $(0, 0.3229, 6.4\cdot10^{-9}, 1.1\cdot10^{-17}, $ \newline$3.1\cdot10^{-27}, 2.2\cdot10^{-37}, 6.4\cdot10^{-48}, $ \newline$1.1\cdot10^{-58}, 1.5\cdot10^{-69}, 2.5\cdot10^{-80},$ \newline$ 6.2\cdot10^{-91}, 0)$ & $(0, 0.4545, 0.6, 0.6667, 0.7, 0.7143, $ \newline$0.7143, 0.7, 0.6667, 0.6, 0.4545, 0)$ \\
		\hline
		(5, 5, 5) & $(0., 0.2191, 3.3\cdot10^{-12}, 1.8\cdot10^{-24}, $ \newline$7.2\cdot10^{-38}, 3.7\cdot10^{-52}, 3.6\cdot10^{-67}, $ \newline$1.0\cdot10^{-82}, 1.1\cdot10^{-98}, 6.7\cdot10^{-115}, $ \newline$2.5\cdot10^{-131}, 9.6\cdot10^{-148}, 4.7\cdot10^{-164}, $ \newline$3.9\cdot10^{-180}, 6.6\cdot10^{-196}, 0)$ & $(0, 0.4667, 0.6190, 0.6923, 0.7333, $ \newline$0.7576, 0.7714, 0.7778, 0.7778, $ \newline$0.7714, 0.7576, 0.7333, 0.6923, $ \newline$0.6190, 0.4667, 0)$
	\end{tabular}
	\captionof{table}{Here the ratios $\gamma^{(f^*)}$ and $\gamma^{(h^*)}$ defined in equation~\eqref{equ:gamma-factor} associated to vectors $\vec{d}$ listed in equation~\eqref{equ:d-vecs} are presented (see also footnote~\ref{fn:gamma-ratio}).}
	\label{tab:frathrat}
\end{table}

\end{appendices}

\addcontentsline{toc}{section}{References}

\bibliographystyle{JHEP}
\bibliography{paper_-_logconcavity_bib}

\end{document}